\theoremstyle{plain}
   \newtheorem{theorem}{Theorem}[section]
   \newtheorem{proposition}[theorem]{Proposition}
   \newtheorem{lemma}[theorem]{Lemma}
   \newtheorem{corollary}[theorem]{Corollary}
   \newtheorem{conjecture}[theorem]{Conjecture}
\theoremstyle{definition}
   \newtheorem{question}{Question}
\theoremstyle{remark}
\author[P.~Br\"and\'en]{Petter Br\"and\'en}
\address{Department of Mathematics, 
Stockholm University, 
SE-106 91 Stockholm, Sweden}
\email{pbranden@math.su.se}  
 \thanks{The author is a Royal Swedish Academy of Sciences Research Fellow 
supported by a grant from the Knut and Alice Wallenberg Foundation.}
\thanks{To appear in Advances in Mathematics.}
\keywords{linear matrix inequalities; determinantal representability; hyperbolic polynomial; polymatroid; subspace arrangements; half-plane property}
\subjclass[2000]{90C22, 90C25, 52B99}
\newcommand{\NN}{\mathbb{N}}
\newcommand{\JJ}{\mathcal{J}}
\newcommand{\MM}{\mathcal{M}}
\newcommand{\BB}{\mathcal{B}}
\newcommand{\VV}{\mathcal{V}}
\newcommand{\EE}{\mathcal{E}}
\newcommand{\ZZ}{\mathbb{Z}}
\newcommand{\RR}{\mathbb{R}}
\newcommand{\CC}{\mathbb{C}}
\renewcommand{\Im}{{\rm Im}}
\def\newop#1{\expandafter\def\csname #1\endcsname{\mathop{\rm
#1}\nolimits}}
\begin{document}
\title[Obstructions to determinantal representability ]
{Obstructions to determinantal representability }
\begin{abstract}
There has recently been ample interest in the question of which sets can be represented by linear matrix inequalities (LMIs). A necessary condition is that the set is rigidly convex, and it has been conjectured that rigid convexity is also sufficient. To this end Helton and Vinnikov conjectured that any real zero polynomial admits a determinantal representation with symmetric matrices. We disprove this conjecture. By relating the question of finding LMI representations to the problem of determining whether a polymatroid is representable over the complex numbers, we find a real zero polynomial such that no power of it admits 
a determinantal representation. The proof uses recent results of Wagner and Wei on matroids with the half-plane property, and the polymatroids associated to hyperbolic polynomials introduced by Gurvits. 
\end{abstract}
\maketitle
\section{Representing sets with linear matrix inequalities}
Motivated by powerful techniques commonly used in control theory, there has recently been considerable interest in the following question. 
\begin{question}\label{Q}
Which subsets of $\RR^n$ can be represented by linear matrix inequalities (LMIs)? That is, which sets $\mathcal{Y}$ are of the form 
\begin{equation}\label{lmi}
\mathcal{Y}=\{ (x_1,\ldots, x_n) \in \RR^n : A_0+A_1x_1+\cdots+A_nx_n \mbox{ is positive semidefinite}\}, 
 \end{equation}
 where $A_0,\ldots, A_n$ are real symmetric $m \times m$ matrices?
\end{question}  
 In two variables such sets were 
characterized by Helton and Vinnikov \cite{HV} by so called rigidly convex sets, thereby answering a question posed by Parrilo and Sturmfels \cite{PS}. We will always assume that $0$ is in the interior of $\mathcal{Y}$ and then the existence of a LMI representation of $\mathcal{Y}$ is equivalent to the existence  of a \emph{monic} LMI representation, i.e., a representation in which $A_0$ is the identity matrix. 

A polynomial $p \in \RR[x_1,\ldots, x_n]$ is a \emph{real zero polynomial} (RZ polynomial) if for each $x \in \RR^n$ and $\mu \in \CC$
\begin{equation}\label{RZ}
p(\mu x)=0 \quad \mbox{ implies } \quad \mu \mbox{ is real. }
\end{equation}
A set $\mathcal{Y} \subseteq \RR^n$ is \emph{rigidly convex} (at the origin) if there is a RZ polynomial $p$ for  which 
$\mathcal{Y}$ is equal to the closure of the connected  component of 
$$\{ x \in \RR^n: p(x)>0\}$$
containing the origin.

In what follows $I$ will always denote the identity matrix of appropriate size. It is not hard to see that if $A_1, \ldots, A_n$ are symmetric or hermitian matrices of the same size then the polynomial 
$\det(I+x_1A_1+\cdots+x_nA_n)$ is a RZ polynomial. In two variables Helton and Vinnikov provided a converse to this fact. 
\begin{theorem}[Helton--Vinnikov, \cite{HV}]\label{HV}
Let $p(x,y)$ be a RZ polynomial of degree $d$, and suppose that $p(0,0)=1$. Then there are symmetric matrices $A$ and $B$ of size $d\times d$ such that 
$$
p(x,y)= \det(I+xA+yB).
$$
\end{theorem}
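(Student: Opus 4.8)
The plan is to translate everything into the geometry of the associated projective plane curve and then apply the theory of self-adjoint determinantal representations of real plane curves via Riemann theta functions — essentially the only route known for a statement of this strength. First I would \emph{homogenize}: set $P(x,y,z)=z^{d}\,p(x/z,y/z)$, the degree-$d$ homogenization of $p$. Writing $p(\mu x_{0},\mu y_{0})$ as a polynomial in $\mu$ and substituting $\mu=1/t$ shows at once that the RZ condition together with $p(0,0)=1$ is equivalent to: $P$ is hyperbolic with respect to $e=(0,0,1)$ and $P(e)=1$. Conversely, any identity $P(x,y,z)=\det(xA+yB+zI)$ with $A,B$ real symmetric $d\times d$ matrices restricts at $z=1$ to $p(x,y)=\det(I+xA+yB)$. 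So it suffices to prove the homogeneous statement, which is precisely Lax's 1958 conjecture: every hyperbolic ternary form of degree $d$ normalized by $P(e)=1$ admits a definite determinantal representation $\det(xA+yB+zI)$.

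Next I would \emph{reduce to smooth curves}. Hyperbolic forms (with a fixed hyperbolicity direction) whose projective curve $C=\{P=0\}\subset\PO^{2}$ is smooth are dense among all such forms, so I would approximate $P$ by smooth hyperbolic $P_{\varepsilon}$, still normalized so that $P_{\varepsilon}(e)=1$, solve the smooth problem to obtain symmetric $(A_{\varepsilon},B_{\varepsilon})$, and pass to a limit. Hyperbolicity furnishes the needed compactness: for fixed real $x,y$ the eigenvalues of $xA_{\varepsilon}+yB_{\varepsilon}$ are the roots in $t$ of the real-rooted polynomial $P_{\varepsilon}(x,y,-t)$, which is monic up to sign with coefficients converging to those of $P(x,y,-t)$; hence these eigenvalues stay bounded, so $\|A_{\varepsilon}\|,\|B_{\varepsilon}\|$ remain bounded, a subsequence converges to symmetric $A,B$, and $\det(xA+yB+zI)=\lim_{\varepsilon}P_{\varepsilon}=P$.

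For the \emph{smooth case}, $C$ is a smooth curve of genus $g=\binom{d-1}{2}$ carrying the antiholomorphic involution $\sigma$ induced by complex conjugation. Hyperbolicity with respect to $e$ forces $C(\RR)$ to consist of $\lfloor d/2\rfloor$ nested ovals (together with a pseudoline when $d$ is odd); in particular $C$ is of dividing type, so $C(\CC)\setminus C(\RR)$ splits into two halves $C^{+},C^{-}$ interchanged by $\sigma$. I would then invoke the classical dictionary (Dixon, Beauville, Vinnikov): linear determinantal representations of $P$ correspond, via cokernel sheaves on $C$, to line bundles $\mathcal{N}\in\operatorname{Pic}^{g-1}(C)$ with $H^{0}(C,\mathcal{N})=0$; self-adjoint ones correspond to $\mathcal{N}$ being a real point of $\operatorname{Pic}^{g-1}(C)$ (that $\sigma^{*}\overline{\mathcal{N}}\cong\mathcal{N}$ after the appropriate twist); and among these the \emph{definite} ones — the ones we may normalize to the monic form $\det(xA+yB+zI)$ — are governed by the dividing structure of $C$, being produced from a half-period naturally attached to the pair $(C,C^{+})$. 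Reconstructing $A$ and $B$ from this line bundle amounts to writing down sections in terms of the Riemann theta function of $C$ and verifying the identity $\det(xA+yB+zI)=P$ by means of the Fay trisecant identity (equivalently, via Cauchy kernels built from prime forms on $C$); the reality and $C^{+}$-positivity of $\mathcal{N}$ then force $A,B$ to be symmetric and $xA+yB+zI$ to be positive definite near $e$.

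The main obstacle is precisely this last step: exhibiting the correct real line bundle — which half-period to take, and above all why the hermitian form it produces is genuinely \emph{definite} and not merely nondegenerate, which is where the full strength of the nested-ovals / dividing-curve picture is used — and then carrying out the theta-function computation that certifies $\det(xA+yB+zI)=P$. The homogenization step, the density-plus-compactness reduction to smooth curves, and the correspondence between representations and line bundles on $C$ are, by comparison, formal.
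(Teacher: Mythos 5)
This theorem is quoted, not proved, in the paper: it is stated as a citation to Helton and Vinnikov \cite{HV}, and elsewhere the author explicitly goes out of his way to give arguments that do \emph{not} depend on it (see the remark before Proposition~\ref{rankp} and all of Section~\ref{hyprank}). So there is no internal proof to compare your attempt against; the only meaningful yardstick is the argument of \cite{HV} itself and the earlier Vinnikov theory of self-adjoint determinantal representations of real plane curves that it rests on.

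Measured that way, your outline is faithful. The reduction steps are all correct: homogenizing to $P(x,y,z)=z^{d}p(x/z,y/z)$ turns the RZ condition with $p(0,0)=1$ into hyperbolicity of $P$ in the direction $(0,0,1)$ with $P(0,0,1)=1$, and the target becomes the Lax-form identity $P=\det(xA+yB+zI)$; density of smooth hyperbolic forms in a fixed direction (Nuij's theorem, cited in this paper for a different purpose) plus the eigenvalue bound you describe gives the compactness needed to pass from the smooth case to the general one; and the Dixon--Beauville--Vinnikov dictionary between determinantal representations of a smooth degree-$d$ plane curve of genus $g=\binom{d-1}{2}$ and line bundles of degree $g-1$ with no global sections, together with the role of the dividing structure coming from the nested ovals, is exactly the engine \cite{HV} runs on. You also correctly locate the genuine difficulty and openly leave it unproved: choosing the correct real half-period adapted to $(C,C^{+})$, proving that the hermitian form it yields is positive \emph{definite} rather than merely nondegenerate, and carrying out the theta-function/prime-form computation that certifies $\det(xA+yB+zI)=P$. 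That gap is not a mistake in strategy --- it is precisely where the substance of \cite{HV} lives --- but it does mean your submission is a correct roadmap rather than a proof. For the purposes of the present paper, which likewise treats the theorem as a black box, that is an appropriate level of detail.
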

Theorem \ref{HV} also settles a conjecture of Peter Lax which asserts that any hyperbolic degree $d$ polynomial in three variables can be represented by a determinant, see \cite{LPR}. By a simple count of parameters one sees that the exact analog 
of Theorem \ref{HV} in three or more variables fails. However, the count of parameters does not preclude a determinantal 
representation with matrices of a size larger than the degree. To this end, Helton and Vinnikov  \cite[p. 668]{HV} made the following conjecture.
\begin{conjecture}[Helton--Vinnikov]\label{con1}
Let $p(x_1,\ldots, x_n)$ be a RZ polynomial, and suppose that $p(0)=1$. Then there 
are symmetric matrices $A_1,\ldots, A_n$ such that 
$$
p(x_1,\ldots, x_n)= \det(I+x_1A_1+\cdots+x_nA_n).
$$
\end{conjecture}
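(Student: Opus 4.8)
The plan is not to prove Conjecture~\ref{con1} but to \emph{disprove} it, and in fact to establish the stronger assertion that there is a RZ polynomial $p$ with $p(0)=1$ such that no power $p^k$ has a determinantal representation $\det(I+x_1A_1+\cdots+x_nA_n)$ with hermitian matrices $A_i$ (which in particular rules out symmetric ones). The first step is to homogenize: a monic determinantal representation of a polynomial in $n$ variables is the same data, via $P(x_0,\xx)=x_0^{\deg p}\,p(\xx/x_0)$, as a \emph{definite} determinantal representation $P=\det(x_0I+x_1A_1+\cdots+x_nA_n)$ of a homogeneous polynomial $P$ hyperbolic with respect to $e_0=(1,0,\ldots,0)$; and power of $p$ corresponds to power of $P$. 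So it suffices to exhibit a homogeneous hyperbolic polynomial $h$ (which will be the homogenization of the sought $p$) no power of which is of the form $\det(y_1A_1+\cdots+y_NA_N)$ with the $A_i$ hermitian.

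Next I would bring in the combinatorics via Gurvits's construction. To any polynomial $h$ that is hyperbolic with respect to a vector $e$ one associates a polymatroid $\mathrm{P}(h)$ on the variable set, whose rank function $r_{\mathrm{P}(h)}$ records, for each subset $S$, the degree to which $h$ genuinely depends on the variables indexed by $S$; one checks directly that $\mathrm{P}(h^k)=k\cdot\mathrm{P}(h)$. The crucial lemma is that a definite determinantal representation yields a linear (subspace-arrangement) representation of this polymatroid over $\CC$: if $h=\det(\sum_i y_iA_i)$ with the $A_i$ hermitian and, as will be the case in our situation, the positive orthant is contained in the hyperbolicity cone, then after a linear change of coordinates normalizing $\sum_i e_iA_i=I$, positivity of $h$ on the hyperbolicity cone forces each $A_i\succeq0$, and then the column spaces $V_i=\operatorname{col}(A_i)\subseteq\CC^{\deg h}$ satisfy $\dim\bigl(\sum_{i\in S}V_i\bigr)=\operatorname{rank}\bigl(\sum_{i\in S}A_i\bigr)=r_{\mathrm{P}(h)}(S)$ for every $S$. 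Hence, if $\mathrm{P}(h)$ admits no representation by subspaces of a complex vector space, and neither does any of its multiples $k\cdot\mathrm{P}(h)$, then no power of $h$ has a hermitian determinantal representation.

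It then remains to choose a good $h$. Take $M$ to be the V\'amos matroid $V_8$ and let $h=\sum_{B\in\BB(M)}\prod_{i\in B}y_i$ be its basis generating polynomial. This polynomial has nonnegative coefficients and, by the theorem of Wagner and Wei, it has the half-plane property; a homogeneous polynomial with the half-plane property is hyperbolic with respect to every point of the open positive orthant at which it does not vanish, so $h$ is hyperbolic with respect to $\one$, the positive orthant sits inside its hyperbolicity cone, and the associated polymatroid $\mathrm{P}(h)$ is exactly the matroid $M$. Dehomogenizing $h$ (setting $y_N=1$, say) produces a RZ polynomial $p$ with $p(0)=1$. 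Since $V_8$ is not representable over any field, it is not representable over $\CC$ as a polymatroid, and — provided one also establishes that no multiple $k\cdot V_8$ is representable over $\CC$ — the previous step shows that no power of $p$ has a determinantal representation, disproving Conjecture~\ref{con1}.

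The main obstacle is precisely this last point: upgrading the classical non-representability of $V_8$ to the statement that \emph{every} integer multiple $k\cdot V_8$ fails to be linearly representable over $\CC$. The classical proof that $V_8$ is non-representable can be phrased through an Ingleton-type inequality that every linearly representable polymatroid must satisfy, a consequence of the submodularity and modularity relations among dimensions of sums and intersections of subspaces; since such an inequality is linear in the rank function, it is preserved under the dilation $r\mapsto kr$, so a single violation by $V_8$ rules out every $k\cdot V_8$ at once. Isolating the precise inequality valid for all complex subspace arrangements, and checking that the V\'amos configuration violates it, is the technical heart of the argument; by contrast, the homogenization bookkeeping of the first step and the column-space computation of the second are routine once set up correctly.
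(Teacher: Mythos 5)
Your plan is a correct disproof of Conjecture~\ref{con1}, but it is essentially the argument the paper reserves for the \emph{stronger} Conjecture~\ref{con2} in Section~\ref{poly}: the V\'amos basis-generating polynomial, hyperbolicity of $h_{V_8}$ from the Wagner--Wei half-plane property theorem, Gurvits's polymatroid, positive-semidefiniteness of the $A_i$ from the hyperbolicity cone containing $\RR_+^n$, the rank identity $\rank\bigl(\sum_{i\in S}A_i\bigr)=\deg_t\det\bigl(I+t\sum_{i\in S}A_i\bigr)$ via Cauchy--Binet, and the observation that Ingleton's inequality is linear in the rank function so the violation by $V_8$ survives dilation to $k\cdot V_8$. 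For Conjecture~\ref{con1} by itself the paper offers something much cheaper in Section~\ref{para}: Theorem~\ref{decrease} shows any monic determinantal representation with matrices of size $N$ can be compressed to one with matrices of size $d=\deg p$, and then Nuij's result that hyperbolic polynomials of degree $d$ form a set with nonempty interior kills the conjecture by a count of parameters. Your route proves more, at the cost of the whole V\'amos/Ingleton apparatus. One inaccuracy to flag: you claim a monic representation $p=\det(I+\sum x_iA_i)$ is ``the same data'' as a definite representation of the degree-$d$ homogenization $P$; this fails when the $A_i$ have size $N>d$, since then $\det(x_0I+\sum x_iA_i)=x_0^{\,N-d}P(x_0,\xx)$, and removing that spurious factor is exactly the nontrivial content of Theorem~\ref{decrease}, not routine bookkeeping. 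You can sidestep the issue by never homogenizing: once the $A_i$ are known PSD, take $V_i$ to be the column space of $A_i$ inside $\CC^N$ and use $\rank\bigl(\sum_{i\in S}A_i\bigr)=\deg_t\det\bigl(I+t\sum_{i\in S}A_i\bigr)=\deg_t p^k\bigl(t\sum_{i\in S}\delta_i\bigr)=k\,r_{V_8}(S)$ directly, which is in fact how the computation in Section~\ref{poly} proceeds.
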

In Section \ref{para} we will find a family of counterexamples to Conjecture \ref{con1}. The following  relaxation of Conjecture \ref{con1} has also been suggested. 
 \begin{conjecture}\label{con2}
Let $p(x_1,\ldots, x_n)$ be a RZ polynomial, and suppose that $p(0)=1$. Then there 
are symmetric matrices $A_1,\ldots, A_n$, and a positive integer $N$ such that 
$$
p(x_1,\ldots, x_n)^N= \det(I+x_1A_1+\cdots+x_nA_n).
$$
\end{conjecture}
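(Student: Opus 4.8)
Since the abstract announces a disproof, the plan is to establish the negation of Conjecture~\ref{con2}: to exhibit one RZ polynomial $p$ with $p(0)=1$ such that $p^N$ admits no symmetric determinantal representation for any $N\geq 1$. The mechanism is to convert symmetric determinantal representability of a power of $p$ into linear representability over $\CC$ of a polymatroid canonically attached to $p$, and then to supply — via a hyperbolic polynomial coming from a matroid — a polymatroid that is not representable over $\CC$, not even after scaling.

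\textbf{The reduction.} Let $p$ be RZ with $p(0)=1$ and $\deg p=d$, and let $P(x_0,\dots,x_n)=x_0^{d}\,p(x_1/x_0,\dots,x_n/x_0)$ be its homogenization, hyperbolic with respect to $e_0=(1,0,\dots,0)$. If $p^N=\det(I+x_1A_1+\cdots+x_nA_n)$ with symmetric $A_i$, then $P^N=\det(x_0I+x_1A_1+\cdots+x_nA_n)$, so the hyperbolicity cone of $P$ at $e_0$ is precisely $\{x:x_0I+\sum x_iA_i\succ 0\}$. I will choose $p$ so that this cone contains the open positive orthant; this holds for the independent-set generating polynomial $p_M(x)=\sum_{I}\prod_{i\in I}x_i$ of a matroid $M$ (sum over independent sets $I$), whose homogenization is $\sum_{I}\prod_{i\in I}x_i\,x_0^{d-|I|}$ with $d$ the rank of $M$. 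Evaluating on the open positive orthant then forces $x_0I+\sum x_iA_i\succ 0$ there, and letting one coordinate tend to $+\infty$ forces every $A_i$ to be positive semidefinite. Writing $V_i$ for the column space of $A_i$, the set function $f(S)=\rank\bigl(\sum_{i\in S}A_i\bigr)=\dim\sum_{i\in S}V_i$ is a polymatroid, represented over $\RR$ (hence over $\CC$) by the subspaces $(V_i)$. Finally, for each $S$ the order of vanishing at $t=0$ of $t\mapsto P^N\bigl(te_0+\sum_{i\in S}e_i\bigr)=\det\bigl(tI+\sum_{i\in S}A_i\bigr)$ equals $\dim\ker\sum_{i\in S}A_i=dN-f(S)$ on one side, while from $P^N=(P)^N$ and the fact that $P$ vanishes to order $d-\rk_M(S)$ at such a point it equals $N\bigl(d-\rk_M(S)\bigr)$ on the other; hence $f=N\cdot\rk_M$. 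Thus Gurvits's polymatroid attached to $P^N$ is $N$ times the matroid rank function $\rk_M$, and a symmetric determinantal representation of $p_M^N$ would make $N\cdot\rk_M$ representable over $\CC$.

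\textbf{The counterexample.} It now suffices to find a matroid $M$ with the half-plane property such that $N\cdot\rk_M$ is representable over $\CC$ for \emph{no} positive integer $N$. The half-plane property is exactly what guarantees that $p_M$ is a genuine RZ polynomial: its homogenization is then stable (a standard consequence of the half-plane property), hence hyperbolic with respect to $e_0$, and $p_M(0)=1$. Here I invoke Wagner and Wei, whose results on the half-plane property yield non-representable matroids with the half-plane property. Taking $p=p_M$ for such an $M$ and combining with the reduction disproves Conjecture~\ref{con2}; already $N=1$ disproves Conjecture~\ref{con1}.

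\textbf{The main obstacle.} The homogenization, the forcing of positive semidefiniteness from a hyperbolicity cone that contains the positive orthant, and the vanishing-order bookkeeping identifying Gurvits's polymatroid of $P^N$ with $N\cdot\rk_M$ are all routine. The two genuine difficulties are (i) that a matroid with the half-plane property can fail to be representable over $\CC$ — the contribution of Wagner and Wei — and, crucially, (ii) that for a suitable such matroid \emph{every} multiple $N\cdot\rk_M$ fails to be representable, i.e.\ the obstruction survives scaling. Point (ii) is what lets the argument cover all powers in Conjecture~\ref{con2}, and it is the reason Gurvits's polymatroids of hyperbolic polynomials, not matroids alone, are the natural setting; proving it is the heart of the matter.
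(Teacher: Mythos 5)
Your high-level strategy matches the paper's: pass through Gurvits's rank function of a hyperbolic polynomial, reduce a determinantal representation of $p^N$ to a $\CC$-linear subspace arrangement representing $N$ times a matroid rank function, and then exhibit a half-plane-property matroid whose rank function cannot arise this way. However, there are two genuine gaps.

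First, the input polynomial is wrong. You take $p_M(x)=\sum_I\prod_{i\in I}x_i$ (the independent-set generating polynomial) and assert that the half-plane property of $M$ makes its homogenization $\sum_I x_0^{d-|I|}\prod_{i\in I}x_i$ stable, hence makes $p_M$ an RZ polynomial. This is false already for $M=U_{2,3}$: there $p_M(t,t,t)=1+3t+3t^2$ has discriminant $-3<0$, so $p_M$ is not RZ and its homogenization $x_0^2+x_0(x_1+x_2+x_3)+(x_1x_2+x_1x_3+x_2x_3)$ is not stable, even though $U_{2,3}$ has the half-plane property. The paper instead sets $p(x)=h_M(x_1+1,\ldots,x_n+1)$ where $h_M$ is the \emph{basis} generating polynomial; this $p$ is RZ precisely because $h_M$ is stable and hence hyperbolic with cone containing $\RR_+^n$. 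Your vanishing-order computation identifying the associated polymatroid with $N\cdot\rk_M$ goes through unchanged with this corrected choice, since both $h_M^N$ and $p_M^N$ have the same rank function on $\{\delta_1,\dots,\delta_n\}$, but the RZ hypothesis you need must be sourced from $h_M(x+\mathbf{1})$, not from $p_M$.

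Second, and more seriously, you explicitly flag as ``the heart of the matter'' that $N\cdot\rk_M$ must fail to be $\CC$-representable for \emph{every} $N\geq1$, and then stop without proving it. This is not a detail to defer: a polymatroid can be non-representable while some integer multiple of it is representable, so the scaling-invariance of the obstruction has to be argued. The paper resolves this by choosing $M=V_8$ (the V\'amos cube), whose rank function violates Ingleton's inequalities. Ingleton's inequalities are \emph{linear} in the rank function, so a violation by $r_{V_8}$ is automatically a violation by $Nr_{V_8}$ for every $N>0$; but every subspace-arrangement polymatroid over a field satisfies them. You also need to name the specific matroid: Wagner--Wei's theorem is that $V_8$ has the half-plane property; its non-representability (indeed, its failure of Ingleton) was already classical. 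Invoking ``non-representable HPP matroids'' generically does not give you an obstruction that survives scaling.
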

In Section \ref{poly} we find a counterexample to Conjecture \ref{con2} by relating the problem of finding determinantal 
representations to the problem of determining whether a given polymatroid is representable (comes from a subspace arrangement). The counterexample arises from the fact that the V\'amos cube is a matroid that has the so called 
\emph{half-plane property} but is not representable over any field, see \cite{WW}. 

The conjecture that any rigidly convex set can be represented by LMIs still remains open.

\section{Counterexamples by a count of parameters}\label{para} 
A homogeneous polynomial $h(x_1,\ldots,x_n) \in \RR[x_1, \ldots, x_n]$ is \emph{hyperbolic} with respect to $e \in \RR^n$ if  $h(e) \neq 0$ and if for each $x \in \RR^n$ and $\mu \in \CC$
\begin{equation}\label{RZ}
h(x+\mu e)=0 \quad \mbox{ implies } \quad \mu \mbox{ is real, }
\end{equation}
see \cite{Ga,Ren}. 
Clearly, if $h$ is hyperbolic with respect to $e$, then $h(x+e)$ is a RZ polynomial. 
The \emph{hyperbolicity cone} of $h$ at $e$  is the set of all $x \in \RR^n$ for which the univariate polynomial 
$t \mapsto p(x+te)$ has only negative zeros. 

We will use the Cauchy--Binet theorem. Let $[n]=\{1,\ldots, n\}$ and let $\binom {[n]} k$ denote the set of all $k$-element subsets of $[n]$. 
If $A$ is an $n \times m$ matrix and $S \subseteq [n]$, $T \subseteq [m]$ are two sets of the same size we denote by $A(S,T)$ the minor of $A$ 
with rows indexed by $S$ and columns indexed by $T$. 
\begin{theorem}[Cauchy--Binet]
Let $A$ be an $m \times n$ matrix and $B$ an $n \times m$ matrix. Then 
$$
\det(AB) = \sum_{S \in \binom {[n]} m} A([m], S)B(S,[m]).
$$
\end{theorem}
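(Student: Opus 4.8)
The plan is to establish the identity by expanding $\det(AB)$ using the multilinearity and antisymmetry of the determinant in its columns. Denote by $A_1,\dots,A_n \in \RR^m$ the columns of $A$ and by $b_{kj}$ the entries of $B$, so that the $j$-th column of $AB$ equals $\sum_{k=1}^n b_{kj} A_k$. First I would substitute this expression for each column and use linearity of the determinant in each column separately to obtain
$$
\det(AB)=\sum_{k_1=1}^n\cdots\sum_{k_m=1}^n b_{k_1,1}\,b_{k_2,2}\cdots b_{k_m,m}\;\det(A_{k_1},A_{k_2},\dots,A_{k_m}),
$$
a sum indexed by all functions $\kappa\colon[m]\to[n]$, $i\mapsto k_i$.

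The next step is to throw away the degenerate terms: if $\kappa$ is not injective then two of the columns $A_{k_1},\dots,A_{k_m}$ coincide and the corresponding determinant vanishes, so only injective $\kappa$ contribute. I would then group the injective functions according to their common image $S\in\binom{[n]}m$; writing $S=\{s_1<\dots<s_m\}$, each injective $\kappa$ with image $S$ has the form $\kappa(i)=s_{\sigma(i)}$ for a unique $\sigma\in\sym_m$, and antisymmetry gives $\det(A_{k_1},\dots,A_{k_m})=\operatorname{sgn}(\sigma)\,A([m],S)$. Substituting and factoring, the contribution of a fixed $S$ becomes $A([m],S)\cdot\sum_{\sigma\in\sym_m}\operatorname{sgn}(\sigma)\prod_{i=1}^m b_{s_{\sigma(i)},i}$, and the sum over $\sigma$ is exactly the Leibniz expansion of $\det B(S,[m])$. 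Summing over $S\in\binom{[n]}m$ yields the claimed formula.

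Since this is a classical fact, I do not expect any real obstacle; the only point that requires care is the bookkeeping in the last step — checking that sorting the columns of $A$ into increasing index order and summing over the permutations $\sigma$ reproduces precisely the signed Leibniz expansion of the minor $B(S,[m])$, with no stray overall sign. (An alternative that sidesteps the index juggling is to compute the determinant of the block matrix $\left(\begin{smallmatrix} I_n & B\\ -A & 0\end{smallmatrix}\right)$ two ways: by the row operation turning it into $\left(\begin{smallmatrix} I_n & B\\ 0 & AB\end{smallmatrix}\right)$, giving $\det(AB)$, and by the generalized Laplace expansion along its last $m$ rows, giving the right-hand side; but the multilinear argument above is the most self-contained.)
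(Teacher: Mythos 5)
The paper states Cauchy--Binet as a classical theorem and gives no proof of its own, so there is nothing to compare against; your multilinear expansion is the standard textbook argument and is correct. Expanding $\det(AB)$ column by column via $(AB)_{\cdot j}=\sum_k b_{kj}A_k$, discarding the non-injective index tuples by antisymmetry, grouping the injective tuples by their image $S\in\binom{[n]}{m}$, and recognizing $\sum_{\sigma\in\sym_m}\operatorname{sgn}(\sigma)\prod_{i}b_{s_{\sigma(i)},i}$ as the Leibniz expansion of $\det B(S,[m])$ is exactly right, with no sign issues; the block-matrix alternative you mention is also a well-known route, though as you note it trades column bookkeeping for Laplace-expansion sign bookkeeping. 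Either would serve; the paper simply takes the result as given.
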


\begin{theorem}\label{decrease}
Let $h(x) \in \RR[x_1,\ldots, x_n]$ be a hyperbolic polynomial with respect to $e$, and let 
$p(x)$ be the RZ-polynomial defined by $p(x)=h(x+e)$. If $p$ admits a representation 
$$
p(x)= \det(I+x_1A_1+ \cdots + x_nA_n)
$$
where $A_j$ is symmetric (hermitian) and of size $N\times N$ for all $j$, then $p$ admits a representation 
 $$
p(x)= \det(I+x_1B_1+ \cdots + x_nB_n)
$$
where $B_j$ is symmetric (hermitian) and of size $d\times d$ for all $j$, and $d$ is the degree of $h$ and $p$. 
\end{theorem}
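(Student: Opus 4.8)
The plan is to show that, after an orthogonal (unitary) change of basis, every size‑$N$ representation of $p$ becomes block diagonal with a $d\times d$ block equal to $I_d+\sum_j x_jB_j$ and a complementary $I_{N-d}$ block. Write $A(x)=x_1A_1+\cdots+x_nA_n$ and homogenize with a new variable $x_0$. The polynomial $\det(x_0I+A(x))$ is homogeneous of degree $N$ and equals $p(x)$ at $x_0=1$; since $p$ has degree $d$, its homogeneous $x$-parts of degree exceeding $d$ are absent, so $\det(x_0I+A(x))$ is divisible by $x_0^{N-d}$ with quotient the degree-$d$ homogenization of $p$, and because $p(x)=h(x+e)$ with $h$ homogeneous of degree $d$, that quotient is $h(x+x_0e)$. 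Thus
\[
\det(x_0I+A(x))=x_0^{\,N-d}\,h(x+x_0e).
\]
I read off two facts. First, for every $x\in\RR^n$ the symmetric (hermitian) matrix $A(x)$ has $0$ as an eigenvalue of multiplicity at least $N-d$, so $\rank A(x)\le d$ on all of $\RR^n$. Second, taking $x=e$ and using $h(e)=p(0)=1$ gives $\det(x_0I+A(e))=x_0^{N-d}(1+x_0)^d$, so the only eigenvalues of $A(e)$ are $0$ and $1$, with multiplicities $N-d$ and $d$; being symmetric (hermitian), $A(e)$ is therefore the orthogonal projection onto $U:=\mathrm{range}\,A(e)$, a subspace of dimension $d$. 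Set $W:=U^{\perp}$.

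Next I would fix an orthonormal basis of $\RR^N$ (resp. $\CC^N$) adapted to $\RR^N=W\oplus U$ — this alters no determinant — and write $A_j=\left(\begin{smallmatrix}D_j & R_j^{*}\\ R_j & C_j\end{smallmatrix}\right)$ in block form, with $D_j$ and $C_j$ symmetric (hermitian) of sizes $N-d$ and $d$ and $R_j$ of size $d\times(N-d)$. Because $A(e)$ is the projection onto $U$, for all sufficiently small real $t$ the matrix $A(e+te_j)=A(e)+tA_j$ equals $\left(\begin{smallmatrix}tD_j & tR_j^{*}\\ tR_j & I_d+tC_j\end{smallmatrix}\right)$, the lower-right block being positive definite and hence invertible. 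The Schur complement rank formula then gives $\rank A(e+te_j)=d+\rank\bigl(tD_j-t^2R_j^{*}(I_d+tC_j)^{-1}R_j\bigr)$; since the left side is at most $d$, the second term is the zero matrix, i.e.\ $D_j=t\,R_j^{*}(I_d+tC_j)^{-1}R_j$ for all small $t\neq0$. Letting $t\to0$ forces $D_j=0$, whereupon $R_j^{*}(I_d+tC_j)^{-1}R_j=0$ with $(I_d+tC_j)^{-1}$ positive definite forces $R_j=0$. Hence each $A_j$ is block diagonal with blocks $0$ and $C_j$, so $I+A(x)$ is block diagonal with blocks $I_{N-d}$ and $I_d+\sum_j x_jC_j$, and therefore $p(x)=\det\bigl(I_d+\sum_j x_jC_j\bigr)$; taking $B_j:=C_j$ finishes the proof, the hermitian case being word for word the same with transposes replaced by conjugate transposes.

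The crux — and the step I expect to be delicate — is passing from the soft rank inequality "$\rank A(x)\le d$ in a neighbourhood of $e$" to the rigid conclusion that the $W$–$W$ and off-diagonal blocks of every $A_j$ vanish. Such a rank bound near an arbitrary point of a symmetric pencil would not suffice (an indefinite lower‑right block could hide the cross term along its isotropic directions), so it is essential to localize precisely at the hyperbolicity direction $e$, where the identity $\det(x_0I+A(x))=x_0^{N-d}h(x+x_0e)$ pins $A(e)$ down to a genuine orthogonal projection; the positive definiteness of $I_d+tC_j$ that this yields is exactly what makes the Schur complement argument go through. The remaining ingredients — the homogenization identity, the block bookkeeping, and the final factorization — are routine.
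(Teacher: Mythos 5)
Your proof is correct and follows a genuinely different route from the paper's. The paper first changes variables so the hyperbolicity cone contains $\RR_+^n$ and $e=\mathbf{1}$, deduces that each $A_j$ and $I-\sum_jA_j$ is PSD, decomposes them into rank-one pieces $v_{ij}v_{ij}^*$ and $u_ju_j^*$, expands $\det(BZB^*)$ via Cauchy--Binet, and uses the homogeneity of $h$ to show the spans of the $u$'s and $v$'s meet trivially, which forces the block factorization $B=PM$ and hence the smaller representation. You instead homogenize directly: the identity $\det(x_0I+A(x))=x_0^{N-d}\,h(x+x_0e)$ shows simultaneously that $\rank A(x)\le d$ for every $x$ and that $A(e)$, being hermitian with characteristic polynomial $x_0^{N-d}(1+x_0)^d$, is the orthogonal projection onto a $d$-dimensional subspace $U$; a Schur-complement computation in a basis adapted to $U^{\perp}\oplus U$ then annihilates the $U^{\perp}$--$U^{\perp}$ and cross blocks of each $A_j$, giving the block-diagonal form at once. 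Your argument is more elementary --- no change of variables, no rank-one decomposition, no Cauchy--Binet --- and in fact uses only the homogeneity of $h$ and $\deg p=d$, not hyperbolicity per se. What the paper's approach buys is that the Cauchy--Binet and rank-one machinery it introduces here is precisely what Section~\ref{poly} reuses to connect determinantal representations with subspace arrangements and polymatroids. One small point worth noting explicitly: you tacitly use $h(e)=1$, which is not listed among the hypotheses but is forced by $p(0)=\det(I)=1$ once the representation is assumed to exist.
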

\begin{proof}
By considering a linear change of variables 
we may, and will,  assume that 
$h(x)$  has hyperbolicity cone containing  $\RR_{+}^d$, where $\RR_+$ is the set of positive reals,  and that $e=(1,\ldots,1)^T =: \mathbf{1}$. 

We claim that $A_j$ and $I-\sum_{j=1}^nA_j$ are positive semidefinite (PSD) for all $j$. The univariate polynomial $$\det(I+tA_j)= p(0,\ldots, t, \ldots, 0)=h(\mathbf{1}+t\delta_j),$$ where  $\delta_j$ is the $j$th standard bases vector, has only are real and non-positive zeros (since $\delta_j$ is in the closure of the hyperbolicity cone of $h$). Hence $A_j$ is PSD. Similarly 
\begin{eqnarray*}
\det\left(tI+I-\sum_{j=1}^nA_j\right) &=& (1+t)^Np(-1/(1+t),\ldots, -1/(1+t))\\ 
&=& (1+t)^Nh(1-1/(1+t),\ldots,1 -1/(1+t)) \\
&=& (1+t)^{N-d}t^d.
\end{eqnarray*}
Hence $I-\sum_{j=1}^nA_j$ is PSD of rank $N-d$. Suppose that $A_i$ has rank $r_i$. Write 
$A_i$ as $A_i = \sum_{j=1}^{r_i}A_{ij}$, where $A_{ij}=v_{ij}v_{ij}^*$ is PSD of rank $1$, $v_{ij}^*$ is the hermitian adjoint of $v_{ij}$, and $v_{ij} \in \CC^N$. Similarly 
we may write $I- \sum_{j=1}^nA_j$ as a sum $\sum_{j=1}^{N-d}C_{j}$, where $C_{j}=u_{j}u_{j}^*$ is PSD of rank $1$. 
Let now 
$$
P(\tilde{x}, \tilde{y}) = \det\left(\sum_{j=1}^{N-d}C_{j}y_j+ \sum_{i,j}A_{ij}x_{ij}\right),  
$$
where $\tilde{x}= (x_{ij})_{i,j}$ and $\tilde{y}=(y_1, \ldots, y_{N-d})$ are new variables.  Let $B$ be the matrix 
with columns $u_1, \ldots, u_{N-d}, v_{11}, \ldots, v_{1r_1}, \ldots v_{nr_n}$. Rename the columns and variables so that $B=[w_1,\ldots, w_M]$, and the corresponding variables are $z=(z_1,\ldots, z_M)$.  By construction and the Cauchy-Binet theorem 
$$
P(z)= \det(B Z B^*)= \sum_{S \in \binom {[M]} N} |B(S)|^2 \prod_{j \in S}z_j, 
$$
where $Z= \diag(z_1,\ldots, z_M)$, and $B(S)=B([N],S)$ is the $N \times N$ minor of $B$ with columns indexed by $S$. 

We obtain $h(x)$ from $P(z)$ by setting $y_j=1$ and $x_{ij}=x_i$ for all $i$ and $j$. Since all coefficients of 
$P(z)$ are nonnegative and $h(x)$ is homogeneous of degree $d$ we have that for each $S$ with $B(S) \neq 0$ there 
are precisely $d$ indices that correspond to $x$-variables and $N-d$ variables that correspond to $y$-variables.   
This means that $U\cap V=(0)$, where  $U= \span\{u_i\}_{i=1}^{N-d}$ and $V= \span\{v_{ij}: 1\leq i \leq n \mbox{ and } {1\leq j \leq r_i} \}$. Hence we 
may write $B$ as $B=PM$ where $P$ is invertible and $M$ is a block matrix
$$M=\left[ \begin{array}{cc}
M_1 & 0 \\
0 & M_2 \end{array} \right],
$$
where $M_1$ has $N-d$ columns and $M_2$ has $M+d-N$ columns. Thus 
\begin{eqnarray*}
P(z)&=&\det(PP^*)\det(MZM^*)\\
&=&\det(PP^*)\sum_{S_1 \in \binom {[N-d]}{N-d}}\sum_{S_2 \in \binom {[N-d+1, M]} d} |M_1(S_1)|^2|M_2(S_2)|^2 \prod_{j \in S_1}z_j \prod_{j \in S_2}z_j \\
&=& \det(PP^*)|\det(M_1)|^2y_1\cdots y_{N-d} \sum_{S_2 \in \binom {[N-d+1, M]} d} |M_2(S_2)|^2 \prod_{j \in S_2}z_j \\
&=&  \det(PP^*)|\det(M_1)|^2y_1\cdots y_{N-d} \det\left( \sum_{i=1}^{M-n+d} z_{N-d+i}m_im_i^*\right),
\end{eqnarray*}
where $m_i$ is the $i$th column of $M_2$. Setting $y_i=1$ and $x_{ij}=x_i$ for all $i$ and $j$ we obtain a 
representation 
$$
h(x)=\det(PP^*)|\det(M_1)|^2\det\left(\sum_{i=1}^n T_i x_i\right),
$$
where each $T_i$ is PSD of size $d \times d$, and $\sum_{i=1}^n T_i$ is positive definite. It follows that $p(x)$ has a representation of the desired form.  
\end{proof} 
 Nuij \cite{Nu} proved that the space of all hyperbolic polynomials of degree $d$ that are hyperbolic with respect to $e \in \RR^n$ has nonempty interior. Hence so does the space of RZ polynomials considered in Theorem \ref{decrease}. Since any such polynomial that admits a determinantal representation also admits a determinantal representation with matrices 
 of size $d$, a count of parameters provides counterexamples to  Conjecture \ref{con1}.

\section{Representability of polymatroids}\label{poly}
We will see here that Question \ref{Q} is closely related to the old problem of determining if a polymatroid is representable 
over $\CC$. 

An (integral) \emph{polymatroid} on a finite set $E$ is a function $r : 2^E \rightarrow \NN$ such that 
\begin{enumerate}
\item $r(\emptyset) = 0$; 
\item If $S \subseteq T \subseteq E$, then $r(S) \leq r(T)$; 
\item $r$ is \emph{submodular}, that is, 
$$
r(S \cup T)+ r(S \cap T) \leq r(S) + r(T), 
$$
for all subsets $S$ and $T$ of $E$. 
\end{enumerate}
A natural class of polymatroids arises from \emph{subspace arrangements}. Let $E$ be a finite set and 
 $\VV= (V_j)_{j\in E}$  a collection of subspaces of a 
finite dimensional vector space $V$ over a field $K$. Then the function $r_\VV : 2^E \rightarrow \NN$ defined by 
$$
r_\VV(S)= \dim\left(\sum_{i \in S}V_i\right), 
$$
where $\sum_{i \in S}V_i$ is the smallest subspace containing $\cup_{i \in S}V_i$,  
is a polymatroid. This follows from the dimension formula for subspaces  $$\dim(U+W)+\dim(U\cap W) = \dim(U)+\dim(W).$$
We say that a polymatroid, $r$, is \emph{representable over the field} $K$ if there is a subspace arrangement $\VV$ of subspaces of a vector space over $K$ such that $r=r_\VV$. There are several inequalities known to hold for representable polymatroids, see \cite{DFZ,Ing,Kinser}. The simplest of these are known as the 
\emph{Ingleton inequalities}. 
\begin{lemma}[Ingleton inequalities \cite{Ing}] 
Suppose that $\VV =(V_1,\ldots, V_n)$ is a subspace arrangement. Then 
\begin{eqnarray*}
 && r_\VV(S_1\cup S_2)+r_\VV(S_1\cup S_3 \cup S_4)+r_\VV(S_3)+r_\VV(S_4)+r_\VV(S_2\cup S_3 \cup S_4) \leq \\
 && r_\VV(S_1\cup S_3) +r_\VV(S_1\cup S_4)+r_\VV(S_2\cup S_3)+r_\VV(S_2\cup S_4)+r_\VV(S_3\cup S_4)
\end{eqnarray*}
for all $S_1, S_2, S_3, S_4 \subseteq [n]$. 
\end{lemma}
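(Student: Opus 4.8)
The plan is to reduce to the case of four subspaces and then to prove that case using nothing beyond the modular identity $\dim(U+W)=\dim U+\dim W-\dim(U\cap W)$ for subspaces $U,W$, applied to a handful of well-chosen pairs. For the reduction, set $W_i=\sum_{j\in S_i}V_j$ for $i=1,2,3,4$; each of the ten terms in the asserted inequality equals $\dim W_i$, $\dim(W_i+W_j)$, or $\dim(W_i+W_j+W_k)$ for suitable indices, so the whole inequality depends only on $W_1,W_2,W_3,W_4$. Hence it suffices to prove the inequality for an arbitrary arrangement of four subspaces (that is, with $n=4$ and $S_i=\{i\}$); writing $A,B,C,D$ for $V_1,V_2,V_3,V_4$, the claim is
\begin{align*}
&\dim(A+B)+\dim(A+C+D)+\dim C+\dim D+\dim(B+C+D)\\
&\qquad\leq\ \dim(A+C)+\dim(A+D)+\dim(B+C)+\dim(B+D)+\dim(C+D).
\end{align*}

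Next I would rewrite this by applying the modular identity to every term; for instance $\dim(A+C+D)=\dim A+\dim C+\dim D-\dim(C\cap D)-\dim\bigl(A\cap(C+D)\bigr)$, and likewise for $\dim(B+C+D)$ and the five two-term sums on the right. Collecting terms and cancelling $\dim A,\dim B,\dim C,\dim D$ (which occur with the same multiplicity on each side), the inequality is equivalent to
\begin{align*}
&\dim(A\cap C)+\dim(A\cap D)+\dim(B\cap C)+\dim(B\cap D)\\
&\qquad\leq\ \dim(A\cap B)+\dim(C\cap D)+\dim\bigl(A\cap(C+D)\bigr)+\dim\bigl(B\cap(C+D)\bigr).
\end{align*}

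Finally I would establish this last inequality by three more uses of the modular identity. Since $A\cap C$ and $A\cap D$ are subspaces of $A\cap(C+D)$ with $(A\cap C)\cap(A\cap D)=A\cap C\cap D$ and $(A\cap C)+(A\cap D)\subseteq A\cap(C+D)$, the modular identity gives $\dim(A\cap C)+\dim(A\cap D)\leq\dim\bigl(A\cap(C+D)\bigr)+\dim(A\cap C\cap D)$, and symmetrically $\dim(B\cap C)+\dim(B\cap D)\leq\dim\bigl(B\cap(C+D)\bigr)+\dim(B\cap C\cap D)$. Applying the modular identity once more to the subspaces $A\cap C\cap D$ and $B\cap C\cap D$ of $C\cap D$, together with $A\cap B\cap C\cap D\subseteq A\cap B$, gives $\dim(A\cap C\cap D)+\dim(B\cap C\cap D)\leq\dim(C\cap D)+\dim(A\cap B)$. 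Summing the three estimates yields the required inequality.

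There is no genuinely hard step here; the argument is bookkeeping plus the modular identity. What deserves emphasis is where the linear structure is actually used: the proof passes through the dimensions of the triple intersections $A\cap C\cap D$, $B\cap C\cap D$ and the quadruple intersection $A\cap B\cap C\cap D$, and these are not determined by the polymatroid rank function $r_\VV$, which records only dimensions of sums (the modular identity does not extend to triples). This is precisely why the Ingleton inequalities, in contrast to submodularity, can fail for polymatroids that do not arise from subspace arrangements --- a failure that, for the V\'amos matroid, will be the obstruction exploited later.
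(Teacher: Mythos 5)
The paper states this lemma as a cited result (attributed to Ingleton's 1969 paper) and does not give a proof, so there is no in-paper argument to compare against. Your proof is correct and is essentially the standard argument for Ingleton's inequality: reduce to the case of four subspaces via $W_i=\sum_{j\in S_i}V_j$, rewrite each term with the modular identity to cancel the $\dim W_i$ contributions (I checked the multiplicities: both sides give $2\dim A+2\dim B+3\dim C+3\dim D$, so they cancel cleanly), and then bound the resulting intersection-form of the inequality by three applications of the modular identity through the auxiliary subspaces $A\cap C\cap D$, $B\cap C\cap D$, and $A\cap B\cap C\cap D$. Each of the three estimates uses only the containments $(A\cap C)+(A\cap D)\subseteq A\cap(C+D)$, $(B\cap C)+(B\cap D)\subseteq B\cap(C+D)$, $(A\cap C\cap D)+(B\cap C\cap D)\subseteq C\cap D$, and $A\cap B\cap C\cap D\subseteq A\cap B$, all of which are immediate; summing the three inequalities cancels the triple-intersection terms and gives exactly the required bound. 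Your closing remark correctly identifies the source of the extra strength: the argument routes through dimensions of triple and quadruple intersections, quantities invisible to the polymatroid rank function, which is precisely why Ingleton's inequality is a representability obstruction rather than a consequence of submodularity --- the point the paper exploits via the V\'amos matroid.
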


To see that subspace arrangements over $\CC$ or $\RR$ are closely related to determinantal representability we proceed to express the rank function in terms of determinants.  
Suppose that $A_1, \ldots, A_n$ are positive semidefinite matrices of the same size $m$, and let 
$\VV= (V_1, \ldots, V_n)$ be the subspace arrangement in $\CC^m$ defined by letting $V_i$ be the image of 
$A_i$ for all $i$. Then 
$$
r_\VV(S)= \rank\left(\sum_{i \in S} A_i\right)= \deg\left( \det\left(I+t\sum_{i \in S}A_i\right)\right), 
$$
for all $S \subseteq [n]$. 
To see this it is enough (by spectral decomposition) to consider the case when all matrices are of rank one and that $S=[n]$. Write $A_i$ as $A_i=v_iv_i^*$ where 
$v_i \in \CC^m$, and let $D$ be the 
$(m+n)\times (m+n)$ diagonal matrix with the first $m$ entries equal to one and the remaining entries equal to $t$. Let further $B$ be the  $m \times (m+n)$ matrix with columns $\delta_1,\ldots, \delta_m, v_1,\ldots, v_n$, where $\delta_i$ is the $i$th standard bases vector of $\CC^m$. Then by the Cauchy--Binet theorem 
$$
\det\left(I+t\sum_{i}A_i\right)= \det(BDB^*)= \sum_{S \in \binom {[m+n]} m} |B(S)|^2 t^{|S \cap \{m+1, \ldots, m+n\}|}.
$$
Hence the degree of the above polynomial is the size of a maximal linearly independent subset of $\{v_1,\ldots, v_n\}$, 
that is, the dimension of $V_1 + \cdots + V_n$. 

Next we will see how polymatroids arise from hyperbolic polynomials. This connection was observed by Gurvits \cite{Gu}. If $h(x_1,\ldots, x_n)$ is a hyperbolic polynomial with respect to $e$, we define a rank function 
$\rank_h : \RR^n \rightarrow \NN$ by 
$$
\rank_h(x)= \deg(h(e+xt)).
$$
The rank does not depend on the choice $e$, but only on the hyperbolicity cone of $h$, that is, $\deg(h(e+xt))=\deg(h(e'+xt))$ for all 
$e'$ in the hyperbolicity cone containing $e$, see \cite{Gu} and Section \ref{hyprank}.

The next proposition follows from the work of Gurvits \cite{Gu}. He uses Theorem \ref{HV}. In Section \ref{hyprank} we give a proof that does not rely on the Lax conjecture. 
\begin{proposition}\label{rankp}
Let $h \in \RR[x_1,\ldots, x_m]$ be a hyperbolic polynomial with respect to $e \in \RR^m$,  and let $\EE = (e_1, \ldots, e_n)$ be a tuple of $n$ vectors lying in the closure of the hyperbolicity cone of $h$ containing $e$. Then 
the function $r_\EE : 2^{[n]} \rightarrow \NN$ defined by 
$$r_\EE(S)= \rank_h\left(\sum_{i \in S}e_i\right) $$ is a polymatroid. 
\end{proposition}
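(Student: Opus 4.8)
The plan is to verify the three defining properties of a polymatroid for $r_\EE$, reducing each of them to an inequality for $\rank_h$ on the closed hyperbolicity cone $\Lambda$ of $h$ containing $e$; this is possible because $r_\EE(S)$ depends only on the vector $e_S:=\sum_{i\in S}e_i$, which lies in $\Lambda$. Property (1) is immediate: $r_\EE(\emptyset)=\rank_h(0)=\deg_t h(e)=0$. Writing $A=e_{S\setminus T}$, $B=e_{T\setminus S}$, $C=e_{S\cap T}$ — each in $\Lambda$, since $\Lambda$ is a convex cone — property (2) becomes $\rank_h(x)\le\rank_h(x+y)$ for $x,y\in\Lambda$, and property (3) becomes
$$\rank_h(A+B+C)+\rank_h(C)\le\rank_h(A+C)+\rank_h(B+C),\qquad A,B,C\in\Lambda .$$
So the task is to prove these two inequalities for the hyperbolic rank function.

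I would build everything on the localization theory of hyperbolic polynomials (Atiyah--Bott--G\aa rding). For $v\in\Lambda$ let $h_v$ be the lowest-degree homogeneous part of $x\mapsto h(v+x)$; then $h_v$ is hyperbolic with respect to $e$, its hyperbolicity cone contains $\Lambda$, its degree $\deg h_v$ equals the order of vanishing of $h$ at the point $v$, and localization is associative, in the sense that $h_{v+w}$ is a positive scalar multiple of $(h_v)_w$ for $v,w\in\Lambda$. I would combine this with two further facts: the elementary identity $\rank_h(v)=d-\operatorname{ord}_{t=0}h(v+te)$ for $v\in\RR^m$ (where $d=\deg h$; it follows from $h(e+tv)=t^{d}h(v+t^{-1}e)$), and the Atiyah--Bott--G\aa rding fact that for $v\in\Lambda$ the quantity $\operatorname{ord}_{t=0}h(v+te)$ equals the order of vanishing of $h$ at $v$. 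Together these give $\deg h_v=d-\rank_h(v)$ and the additivity formula
$$\rank_h(v+w)=\rank_h(v)+\rank_{h_v}(w),\qquad v,w\in\Lambda ,$$
and since $\rank_{h_v}(w)\ge 0$ this already yields property (2).

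For property (3) I would first apply the additivity formula with $v=C$ to all four terms; after cancellation the inequality collapses to subadditivity of $\rank_g$ for $g=h_C$ (with $A,B$ in its hyperbolicity cone, which contains $\Lambda$), namely $\rank_g(A+B)\le\rank_g(A)+\rank_g(B)$. Applying additivity once more, now to $g$ at $A$, reduces this to showing that passing to a localization never enlarges the rank function: $\rank_{g_A}(w)\le\rank_g(w)$ for $w$ in the cone of $g$. This is the computational heart, and I would prove it by a limiting argument. For $\varepsilon>0$ the point $A+\varepsilon e$ lies in the open hyperbolicity cone, so by the base-point independence of the hyperbolic rank (stated above, and proved in this same section) $\rank_g(w)=\deg_t g\bigl(A+\varepsilon e+\varepsilon tw\bigr)$ for every $\varepsilon>0$; on the other hand $\varepsilon^{-\deg g_A}g\bigl(A+\varepsilon(e+tw)\bigr)\to g_A(e+tw)$ coefficientwise in $t$ as $\varepsilon\to 0^{+}$, and a coefficientwise limit of polynomials of bounded degree cannot have larger degree, so $\rank_{g_A}(w)=\deg_t g_A(e+tw)\le\rank_g(w)$.

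The delicate part is not this limiting argument, which is short once base-point independence is available, but rather isolating exactly the statements needed from localization theory — that $h_v$ is hyperbolic with $\Lambda\subseteq\Lambda_{h_v}$, the identification of $\operatorname{ord}_{t=0}h(v+te)$ with the order of vanishing of $h$ at $v$ for $v\in\Lambda$, and associativity of localization — and in handling the degenerate cases, for instance $v$ in the interior of $\Lambda$, where $h_v$ is a nonzero constant and $\rank_{h_v}\equiv 0$, so that the additivity formula correctly returns $\rank_h(v+w)=d$.
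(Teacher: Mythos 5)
Your argument is correct, but it follows a genuinely different route from the paper. The paper proves the proposition by expressing $r_\EE(S)$ as $\max\{\sum_{i\in S}\alpha_i : \alpha\in\JJ\}$ for $\JJ$ the support of the stable, homogeneous polynomial $h(x_1e_1+\cdots+x_ne_n)$ (Corollary~\ref{desc}); it then invokes Theorem~\ref{supp} to see that $\JJ$ is a constant-sum jump system, identifies such jump systems with the integer points of integral base polyhedra, and outsources submodularity to Murota's discrete convex analysis. You instead work entirely on the hyperbolic-analysis side: you import the Atiyah--Bott--G\aa rding localization machinery (hyperbolicity of $h_v$ for $v\in\Lambda$, the containment $\Lambda\subseteq\overline{\Lambda_{h_v}}$, the identity $\operatorname{ord}_{t=0}h(v+te)=\deg h_v$, and associativity $h_{v+w}\sim(h_v)_w$), derive the additivity formula $\rank_h(v+w)=\rank_h(v)+\rank_{h_v}(w)$, and reduce submodularity to the single inequality $\rank_{g_A}\le\rank_g$, which you establish by an $\varepsilon\to0^+$ limit using the base-point independence of Lemma~\ref{diff}. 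Your approach trades the combinatorial machinery (jump systems, discrete convexity) for classical localization theory; it is perhaps more self-contained for readers fluent in hyperbolic polynomials, and the additivity formula gives a cleaner explanation of \emph{why} the rank is submodular, whereas the paper's route is shorter given the stable-polynomial lemmas already developed in the section and fits its stated aim of deriving everything from concavity properties of stable polynomials and discrete convexity. One point you should make explicit if writing this up: the localization facts you rely on are themselves nontrivial and need either a reference to G\aa rding/ABG/Renegar or an independent proof, since the paper's stated goal in this section is to keep the argument self-contained modulo \cite{Br1} and \cite{Murota}.
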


 A \emph{matroid}, $\MM$,  may be defined as a polymatroid for which the rank function satifies $r_\MM(\{i\})\leq1$ for all $i\in E$.  Let $\MM$ be a matroid on $E$. The \emph{set of bases} of $\MM$  is 
$$
\BB(\MM)= \{ S \subseteq E : |S|=r_\MM(S)=r_\MM(E)\}. 
$$
It follows from the equivalent definitions of matroids, see \cite{Oxley}, that 
\begin{equation}\label{eqrank}
r_\MM(S)= \max\{ |S\cap B| : B \in \BB(\MM)\}, 
\end{equation}
for all $S \subseteq E$. 
The \emph{bases generating polynomial} of $\MM$ is  the polynomial in the variables 
$(x_i)_{i \in E}$ defined by 
$$
h_\MM(x)= \sum_{S \in \BB(\MM)} \prod_{j \in S}x_j. 
$$
For $i \in E$, let $\delta_i \in \RR^E$ be defined by $\delta_j(i)=\delta(i,j)$, where $\delta(i,j)$ is the Kronecker delta. By \eqref{eqrank} 
\begin{equation}\label{marank}
r_\MM(S)= \deg \left( h_\MM\left(\mathbf{1}+t\sum_{i \in S}\delta_i \right)\right),
\end{equation}
for all $S \subseteq E$.

Let $V_8$ be the V\'amos cube, see \cite{Oxley}. The set of bases of $V_8$ are all subsets of size four in Fig. \ref{tru}, that do not lie in an affine  plane. The V\'amos cube is not representable over any field. However, its bases generating polynomial is hyperbolic with hyperbolicity cone containing $\RR_+^8$. This follows from the fact that $V_8$ is a so called \emph{half-plane property matroid} (see \cite{COSW}) which was proved by Wagner and Wei \cite{WW}. 
\begin{figure}[htp] 
 \centering
 \includegraphics[height=2in]{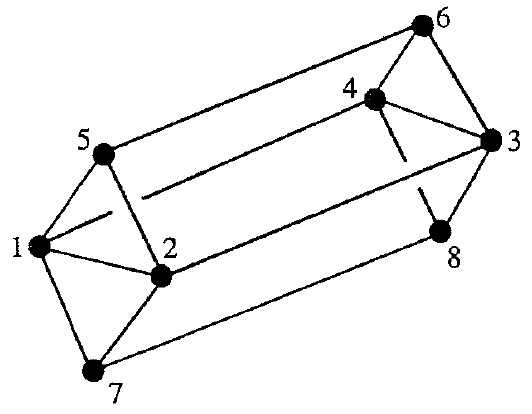}
\caption{\label{tru} The Vamos cube.}
\end{figure}

We are now in a position to establish the counterexample to Conjecture \ref{con2}. 
\begin{theorem}
Let $p(x)=h_{V_8}(x_1+1, \ldots, x_8+1)$. Then 
\begin{enumerate}
\item $p(x)$ is a RZ-polynomial; 
\item There is no positive  integer $N$ such that $p(x)^N$ has a determinantal representation. 
\end{enumerate}
\begin{proof}
Wagner and Wei \cite{WW} proved that  $h_{V_8}(x)$ is a stable polynomial, that is, $h_{V_8}(x)$ is non-zero whenever $\Im(x_i) >0$ for all $i$. Hence, 
if $x \in \RR^8$ and $y \in \RR_+^8$, then the polynomial $h_{V_8}(x+ty)$ has only real zeros. Thus $h_{V_8}(x)$ is hyperbolic with hyperbolicity cone containing $\RR_+^8$. As previously noted it follows that $p(x)=h_{V_8}(x+ \mathbf{1})$ is a RZ polynomial. 

Suppose that there is an integer $N>0$ for which 
$$
p(x)^N= \det(I +x_1A_1+\cdots+ x_8A_8), 
$$
where $A_i$ is hermitian for all $i$. As in the proof of Theorem \ref{decrease} it follows that 
$$
h_{V_8}(x)^N= \det(x_1B_1+\cdots+ x_8B_8),
$$
where $B_i$ is positive semidefinite of size 
$(8N)\times (8N)$ for all $i$. Of course $h_{V_8}(x)^N$ is also hyperbolic with the same hyperbolicity cone as $h_{V_8}(x)$.  By \eqref{marank}, the rank function of $h_{V_8}(x)^N$ with respect to  $\EE= \{\delta_1,\ldots, \delta_8\}$ satisfies
$$
r_{\EE}(S) = \deg \left( h_{V_8}^N\left(\mathbf{1}+t\sum_{i \in S}\delta_i \right)\right)=Nr_{V_8}(S), 
$$
for all $S \subseteq [8]$. Hence there is a subspace arrangement $\VV=(V_1, \ldots, V_8)$ for which 
$$
r_\VV= Nr_{V_8}. 
$$
However, it is known that $r_{V_8}$ (and thus also $N r_{V_8}$) fails to satisfy Ingleton's inequalities. This is seen by choosing 
$$
S_1=\{5,6\}, \quad S_2=\{7,8\}, \quad S_3=\{1,4\}, \quad S_4=\{2,3\},  
$$
in the Ingleton inequalities.   

\end{proof}

\end{theorem}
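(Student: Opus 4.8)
The plan is to treat the two assertions separately; the first is a quick consequence of the half-plane property of $V_8$, and the second is where the real content lies. For Part~(1) I would begin with the theorem of Wagner and Wei \cite{WW} that $h_{V_8}$ is \emph{stable}, i.e.\ it does not vanish at any point all of whose coordinates have positive imaginary part. From stability one deduces by a routine rotation argument that, for real $x\in\RR^8$ and $y\in\RR_+^8$, the univariate polynomial $t\mapsto h_{V_8}(x+ty)$ has only real zeros. Since $h_{V_8}$ is homogeneous and $\mathbf 1\in\RR_+^8$, this means precisely that $h_{V_8}$ is hyperbolic with respect to $\mathbf 1$ and that $\RR_+^8$ is contained in its hyperbolicity cone. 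As recalled in Section~\ref{para}, the shift of a hyperbolic polynomial by a point of its hyperbolicity cone is an RZ polynomial, so $p(x)=h_{V_8}(x+\mathbf 1)$ is RZ.

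For Part~(2), suppose for contradiction that $p(x)^N=\det(I+x_1A_1+\cdots+x_8A_8)$ with each $A_i$ hermitian. Set $H=h_{V_8}^N$; then $H$ is homogeneous, hyperbolic with respect to $\mathbf 1$ with the same hyperbolicity cone as $h_{V_8}$ (a polynomial and its powers share their hyperbolicity cone), and $p(x)^N=H(x+\mathbf 1)$. I would then invoke the proof technique of Theorem~\ref{decrease}, applied with $H$ in the role of $h$: because each $\delta_j$ lies in the closed hyperbolicity cone of $H$, the matrices $A_j$ and $I-\sum_i A_i$ are positive semidefinite, and the Cauchy--Binet homogenisation in that proof converts the representation of $p^N$ into $H(x)=c\,\det\!\big(\sum_i x_iT_i\big)$ with each $T_i$ positive semidefinite, $\sum_i T_i$ positive definite, and $c>0$; absorbing $c$ by setting $B_i=c^{1/\deg H}T_i$ yields
$$
h_{V_8}(x)^N=\det\!\big(x_1B_1+\cdots+x_8B_8\big),
$$
with every $B_i$ positive semidefinite and $\sum_i B_i$ positive definite.

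Next, let $V_i$ be the column space of $B_i$ and $\VV=(V_1,\ldots,V_8)$; this is a subspace arrangement over $\CC$. By the correspondence between positive semidefinite matrices and subspace arrangements recalled above, $r_\VV(S)=\rank\!\big(\sum_{i\in S}B_i\big)=\deg_t\det\!\big(I+t\sum_{i\in S}B_i\big)$; since $\sum_iB_i$ is invertible, this also equals $\deg_t\det\!\big(\sum_iB_i+t\sum_{i\in S}B_i\big)=\deg_t H\big(\mathbf 1+t\sum_{i\in S}\delta_i\big)$, which by \eqref{marank} is $N\,r_{V_8}(S)$. Hence $r_\VV=N\,r_{V_8}$, so $N\,r_{V_8}$ is representable over $\CC$ and therefore satisfies the Ingleton inequalities. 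But Ingleton's inequality is linear in the rank function, so it holds for $N\,r_{V_8}$ if and only if it holds for $r_{V_8}$, and the latter fails: taking $S_1=\{5,6\}$, $S_2=\{7,8\}$, $S_3=\{1,4\}$, $S_4=\{2,3\}$ and reading off the relevant ranks from the list of bases of $V_8$ produces a strict violation. This contradiction establishes~(2).

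The step I expect to demand the most care is the reduction that opens Part~(2): one must confirm that the opening portion of the proof of Theorem~\ref{decrease} carries over verbatim with the homogeneous polynomial $H=h_{V_8}^N$ — the inputs needed being only that $H$ is hyperbolic with hyperbolicity cone containing $\RR_+^8$ and that $p^N=H(\cdot+\mathbf 1)$ — and, in particular, that one genuinely obtains a representation of $H$ by a linear combination of \emph{positive semidefinite} matrices, since only then does the passage to a subspace arrangement make sense. Everything downstream is a chain of standard identifications (hyperbolic rank $=$ $t$-degree of a restricted polynomial $=$ rank of a sum of positive semidefinite matrices $=$ dimension of a sum of image subspaces), and the contradiction ultimately rests on two external facts: the stability of $h_{V_8}$ due to Wagner--Wei, and the non-representability of the V\'amos matroid in the quantitative form of a violated Ingleton inequality.
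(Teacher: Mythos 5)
Your proposal is correct and follows essentially the same route as the paper's own proof: stability of $h_{V_8}$ (Wagner--Wei) gives hyperbolicity and hence that $p$ is RZ; the Cauchy--Binet homogenisation from Theorem~\ref{decrease}, applied to $H=h_{V_8}^N$, produces the PSD determinantal representation; and the induced subspace arrangement would make $N\,r_{V_8}$ satisfy Ingleton, contradicting the known violation for the V\'amos cube with the same choice of $S_1,\ldots,S_4$. The extra care you take (absorbing the positive scalar into the $B_i$, noting $\sum_i B_i$ is invertible so that $\deg_t\det(\sum_iB_i+t\sum_{i\in S}B_i)=\rank(\sum_{i\in S}B_i)$, and observing that Ingleton's inequality is linear in the rank and therefore scales by $N$) only makes explicit steps the paper leaves implicit; the underlying argument is the same.
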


\section{Properties of the rank function of a hyperbolic polynomial }\label{hyprank}
For completeness we give proofs that do not use the Lax conjecture of the properties we use about the rank function associated to a hyperbolic polynomial. We show that these properties are simple consequences of known concavity properties of stable polynomials and discrete convex functions.  

A \emph{step} from $\alpha \in \ZZ^n$ to $\beta \in \ZZ^n$ is a vector $s \in \ZZ^n$ of unit length such that 
$$
|\alpha+s-\beta| < |\alpha-\beta|, 
$$
where $|\alpha|= \sum_{i=1}^n |\alpha_i|$.  If $s$ is a step from $\alpha$ to $\beta$ we write $\alpha \stackrel s {\rightarrow} \beta$. A set $\JJ \subseteq \ZZ^n$ is called a {\em jump system} if it respects the following axiom.  
\begin{itemize}
\item[(J):] If 
$\alpha,\beta \in \JJ$, $\alpha \stackrel s {\rightarrow} \beta$ and  $\alpha+s \notin \JJ$, then there is 
a step $t$ such that $\alpha+s \stackrel t {\rightarrow} \beta$ and $\alpha+s+t \in \JJ$. 
\end{itemize}
The support, $\supp(p)$, of a polynomial $p(x)= \sum_{\alpha \in \NN^n} a(\alpha) x_1^{\alpha_1}\cdots x_n^{\alpha_n}$ is the set $\{\alpha \in \NN^n : a(\alpha) \neq 0\}$. A polynomial $p \in \CC[x_1,\ldots, x_n]$ is \emph{stable} if 
$p(x) \neq 0$ whenever $\Im(x_j)>0$ for all $j$. Let $\leq$ be the usual product order on $\ZZ^n$, i.e., $\alpha \leq \beta$ if $\alpha_j \leq \beta_j$ for all $j$.
\begin{theorem}[\cite{Br1}]\label{supp}
The support of a stable polynomial is a jump system. 

Moreover, if all the Taylor coefficients of the stable polynomial $p$ are nonnegative, and $\alpha, \beta \in \supp(p)$ with $\alpha \leq \beta$, then  $\gamma \in \supp(p)$ for all $\alpha \leq \gamma \leq \beta$. 
\end{theorem}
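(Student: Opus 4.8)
The plan is to prove the two assertions separately, using throughout the operations that preserve stability: if $p$ is stable then so are every partial derivative $\partial_{x_i}p$, every specialisation $p|_{x_i=a}$ with $a\in\RR$ that is not identically zero, every coefficient $[x_i^k]p=\frac{1}{k!}\partial_{x_i}^k p|_{x_i=0}$, every positive rescaling $x_i\mapsto cx_i$ with $c>0$, every permutation of the variables, and the reflection $p(x)\mapsto\prod_i x_i^{\deg_{x_i}p}\cdot p(-1/x_1,\ldots,-1/x_n)$, which sends the support to its image under $\gamma\mapsto(\deg_{x_i}p-\gamma_i)_i$. The one-variable input, used repeatedly, is elementary: a univariate polynomial with nonnegative real coefficients is stable if and only if all its roots are real and non-positive, and hence its support is an interval $\{m,m+1,\ldots,M\}$ of integers.

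For the jump system assertion I would reduce the axiom (J) to the case of one or two variables. Given $\alpha,\beta\in\supp(p)$ and a step $s=\pm e_i$ from $\alpha$ toward $\beta$ with $\alpha+s\notin\supp(p)$, the step $t$ to be produced is supported on a single coordinate $j$, so $\alpha+s+t$ differs from $\alpha$ only in coordinates $i$ and $j$; extracting the coefficient $[x_l^{\alpha_l}]$ for every $l\notin\{i,j\}$ preserves stability and keeps the monomial $x^\alpha$ alive, and so passes to a stable polynomial in the two variables $x_i,x_j$ in which (J) can be checked directly. The bookkeeping forced by the fact that $\alpha$ and $\beta$ may disagree in many coordinates is absorbed into an induction on $|\alpha-\beta|$, using (J) for pairs of coordinates to walk a support point from $\beta$ toward $\alpha$ until the disagreement is confined to the relevant coordinate plane. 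The one- and two-variable cases themselves are handled either directly or, more cleanly, by polarisation: by the Grace--Walsh--Szeg\H{o} theorem the polarisation $\mathrm{Pol}(p)$ is multiaffine and stable, the support of a multiaffine stable polynomial is the family of feasible sets of a delta-matroid (its symmetric exchange axiom following from closure under $\partial_{x_i}$ and $x_i\mapsto 0$), the feasible sets of a delta-matroid form a jump system, and $\supp(p)$ is the image of $\supp(\mathrm{Pol}(p))$ under the linear map summing each block of polarising variables -- an image that, because $\mathrm{Pol}(p)$ is symmetric within each block, is again a jump system.

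For the second assertion, assume in addition that all Taylor coefficients of $p$ are nonnegative and induct on $d=\sum_i(\beta_i-\alpha_i)$. The case $d=0$ is trivial. Otherwise fix a coordinate $i$ with $\beta_i>\alpha_i$; once we know that $\alpha+e_i\in\supp(p)$ and $\beta-e_i\in\supp(p)$, the inductive hypothesis applied to the pairs $(\alpha+e_i,\beta)$ and $(\alpha,\beta-e_i)$ completes the proof, since $[\alpha,\beta]=[\alpha+e_i,\beta]\cup[\alpha,\beta-e_i]$. By the reflection it suffices to produce $\alpha+e_i$. The jump system property already established supplies, unless $\alpha+e_i\in\supp(p)$ outright, a coordinate $j$ (possibly $j=i$) with $\beta_j>\alpha_j$ and $\alpha+e_i+e_j\in\supp(p)$; extracting $[x_l^{\alpha_l}]$ for $l\notin\{i,j\}$ reduces the task to showing that a stable polynomial in one or two variables with nonnegative real coefficients whose support contains both $0$ and $e_i+e_j$ must contain $e_i$. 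When $j=i$ this is the one-variable interval fact. When $j\neq i$ it says that a nonnegative stable bivariate polynomial whose support meets both $(0,0)$ and $(1,1)$ has nonzero coefficient of $x$ (and, symmetrically, of $y$); I would argue by contradiction, showing that a vanishing coefficient of $x$ forces the local expansion of the polynomial at the origin to produce, for every small $x$ in the open upper half-plane, a zero in $y$ lying in the open upper half-plane, which contradicts stability once the approximate zero is upgraded to a genuine one by Hurwitz's theorem.

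The step I expect to be the main obstacle lies in the first assertion: controlling (J) for support points that disagree in many coordinates, and -- on the polarisation route -- verifying that pushing a jump system forward along the block-summation map creates no gaps, which is exactly where the within-block symmetry of $\mathrm{Pol}(p)$ must be used. In the second assertion the corresponding delicate point is analytic: upgrading the approximate zero of the bivariate polynomial in the product of open upper half-planes to an honest zero requires a continuity and Rouch\'e argument rather than the purely formal manipulations used everywhere else.
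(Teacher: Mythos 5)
The statement is cited from \cite{Br1} and the present paper offers no proof of it, so what follows assesses your proposal on its own terms and against the argument in \cite{Br1}.

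Your overall toolkit (stability-preserving operations, reduction to few variables, the Grace--Walsh--Szeg\H{o} polarization, and the univariate interval fact) is the right one and is essentially the toolkit used in \cite{Br1}. However, both routes you sketch for the jump-system assertion have real gaps. Route (a) is circular: after fixing $i$ and a putative $j$, extracting $[x_l^{\alpha_l}]$ for $l\notin\{i,j\}$ kills the monomial $x^{\beta}$ whenever $\beta_l\neq\alpha_l$, so the two-variable polynomial you land in no longer contains $\beta$ and axiom (J) cannot be ``checked directly'' there; the proposed remedy of ``walking a support point from $\beta$ toward $\alpha$ using (J) for pairs of coordinates'' is precisely an instance of the statement you are trying to prove, and the induction on $|\alpha-\beta|$ is not set up in a way that breaks that circularity (the inductive hypothesis gives a step toward some intermediate $\beta'$, not toward $\beta$). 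Route (b) is closer to the published proof, but the crucial claim that the support of a multiaffine stable polynomial is a delta-matroid does not ``follow from closure under $\partial_{x_i}$ and $x_i\mapsto 0$''; those operations let you delete and contract, but the symmetric exchange axiom compares two feasible sets $F_1,F_2$ that may differ in arbitrarily many coordinates, and after contracting $F_1\cap F_2$ and deleting the complement of $F_1\cup F_2$ you are still left with a polynomial in $|F_1\triangle F_2|$ variables, not two. Establishing exchange there requires a genuine lemma (of the Choe--Oxley--Sokal--Wagner / Br\"and\'en type, ultimately resting on the Rayleigh inequality or a Hermite--Biehler analysis of a bivariate restriction), not mere closure under minors. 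The push-forward step for block summation is fine, but the reason it works is the aggregation lemma for jump systems together with the fact that, by within-block symmetry, lifts of $\alpha,\beta$ can be chosen coordinatewise nested; your sketch gestures at the symmetry but omits the nestedness argument that actually makes the projected step land correctly.

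For the ``moreover'' part your induction on $\sum_i(\beta_i-\alpha_i)$ and the decomposition $[\alpha,\beta]=[\alpha+e_i,\beta]\cup[\alpha,\beta-e_i]$ are correct, and the reduction to a bivariate base case is sound (note that to preserve nonnegativity you want the reflection $p\mapsto\prod_i x_i^{d_i}p(1/x_1,\dots,1/x_n)$, which is stable because $p$ has real coefficients, rather than the $-1/x_i$ version you wrote, which flips signs). The bivariate base case itself — a stable $Q$ with nonnegative coefficients and $(0,0),(1,1)\in\supp(Q)$ must have $(1,0)\in\supp(Q)$ — is the real content, and it does \emph{not} follow from the jump-system property alone (axiom (J) is vacuously satisfied by $\{(0,0),(1,1)\}$). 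Your Hurwitz/Rouch\'e argument can be made to work, but the cleaner and standard route is the coefficient inequality $a_{10}a_{01}\ge a_{00}a_{11}$ for bivariate stable polynomials with nonnegative coefficients (a Rayleigh/Newton-type inequality, obtainable e.g.\ by polarizing to the multiaffine strong Rayleigh condition), which yields the conclusion immediately. In short: the architecture of your proposal matches the spirit of \cite{Br1}, but the multiaffine exchange lemma and the bivariate coefficient inequality are the two load-bearing facts, and as written both are asserted rather than proved.
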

We need the following simple property of jump systems. 
\begin{lemma}\label{simple}
 If 
$\JJ \subset \ZZ^n$ is a finite jump system and $\alpha, \beta \in \JJ$ are maximal (or minimal) with respect to $\leq$, then $|\alpha|= |\beta|$. 
\end{lemma}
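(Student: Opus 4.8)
The plan is to use the jump system axiom to move between two maximal elements one unit step at a time while controlling the $\ell^1$-norm. Suppose $\alpha$ and $\beta$ are both maximal elements of $\JJ$ with respect to $\leq$, and suppose toward a contradiction that $|\alpha| \neq |\beta|$; without loss of generality $|\alpha| < |\beta|$. I would choose a step $s$ from $\alpha$ towards $\beta$, i.e.\ a unit vector $s \in \ZZ^n$ with $|\alpha+s-\beta| < |\alpha-\beta|$; such a step exists whenever $\alpha \neq \beta$, and it necessarily has the form $s = \delta_i$ or $s = -\delta_i$ for a coordinate $i$ where $\alpha_i \neq \beta_i$, with the sign pointing from $\alpha_i$ toward $\beta_i$.

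The key observation is a parity/monotonicity bookkeeping argument. If $\alpha + s \in \JJ$, replace $\alpha$ by $\alpha+s$; this strictly decreases $|\alpha - \beta|$. If $\alpha + s \notin \JJ$, axiom (J) furnishes a second step $t$ with $\alpha + s \stackrel{t}{\rightarrow} \beta$ and $\alpha + s + t \in \JJ$; replace $\alpha$ by $\alpha + s + t$, which decreases $|\alpha-\beta|$ by $2$. Iterating, we obtain a sequence of elements of $\JJ$ terminating at $\beta$, and along the way $|\alpha-\beta|$ strictly decreases at each stage, so the process halts (here finiteness of $\JJ$, or just of the interval between the two points, guarantees termination). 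Now I track the coordinate sums: each single step $s$ changes $|\alpha|$ by $\pm 1$, but since $s$ always points from $\alpha_i$ toward $\beta_i$, and we are assuming $\alpha < \beta$ is false in general — here is the subtlety — I need to argue that the net effect forces $|\alpha|$ to increase strictly overall while $\alpha$ remains $\leq$-incomparable improvements are impossible because $\alpha$ was maximal.

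Here is the cleaner route, which I would actually write: since $\alpha$ is maximal, for the \emph{first} step $s$ out of $\alpha$ we cannot have $s = \delta_i$ (that would give $\alpha + s \geq \alpha$ with $\alpha+s \in \JJ$ only if the two-step correction lands us somewhere $\geq \alpha$, contradicting maximality — one must check the correction step $t$ cannot undo this). So the first step must be $s = -\delta_i$ for some $i$ with $\alpha_i > \beta_i$. But this already contradicts $|\alpha| < |\beta|$: if $|\alpha| < |\beta|$ then $\sum_i(\beta_i - \alpha_i) > \sum_i(\alpha_i-\beta_i)_+ \cdot 0$... I need the sharper statement that $|\alpha| < |\beta|$ together with $\alpha,\beta$ maximal is contradictory, proved by running the algorithm from $\alpha$ to $\beta$ and from $\beta$ to $\alpha$ symmetrically: from the maximal point $\alpha$, every admissible first step decreases some coordinate (by maximality, as above), hence decreases $|\alpha|$ or keeps it equal after correction; symmetrically from $\beta$. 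Combining the two monotonicity statements along the path between $\alpha$ and $\beta$ pins $|\alpha| = |\beta|$.

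The main obstacle I anticipate is handling the correction step $t$ from axiom (J) carefully: when $\alpha+s \notin \JJ$ and we jump to $\alpha+s+t$, I must verify that $t$ is \emph{also} a step toward $\beta$ (this is given) and determine its sign relative to maximality of $\alpha$. The delicate case is $t = \delta_j$ with $j \neq i$, where the net move $\alpha \mapsto \alpha - \delta_i + \delta_j$ is $\leq$-incomparable to $\alpha$, so maximality of $\alpha$ is not directly violated; one then continues the descent and invokes maximality only at the start (the first step out of a maximal vertex), not at every intermediate vertex. So the right formulation is: \emph{the first step} $s$ out of a $\leq$-maximal $\alpha \in \JJ$ heading toward any $\beta \in \JJ$ satisfies $|\alpha + s| \leq |\alpha|$, and if equality fails at the corrected jump then we would produce an element of $\JJ$ strictly above $\alpha$ — so in fact $|\alpha+s|$ or $|\alpha+s+t|$ is $\leq |\alpha|$, and by symmetry, tracking both endpoints, $|\alpha|=|\beta|$. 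I would organize the write-up around that single ``first-step'' lemma and then apply it at both ends of a minimal-length lattice path connecting $\alpha$ and $\beta$ inside $\JJ$.
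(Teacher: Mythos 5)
Your overall plan — walk from one maximal element toward the other by single or corrected double steps, using maximality to control the change in $|\cdot|$ — is in the right spirit, but the argument as written does not close, and one of your intermediate claims is false.

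The false claim is that ``the first step must be $s = -\delta_i$.'' A step $s = \delta_i$ from a $\leq$-maximal $\alpha$ toward $\beta$ (possible whenever $\alpha_i < \beta_i$) is not excluded. If $\alpha + \delta_i \notin \JJ$, axiom (J) gives a corrective step $t$ from $\alpha + \delta_i$ toward $\beta$ with $\alpha + \delta_i + t \in \JJ$. Maximality of $\alpha$ rules out $t = \delta_k$ (any $k$), and $t = -\delta_i$ is not a step toward $\beta$; but $t = -\delta_k$ with $k \neq i$ is perfectly consistent: the resulting $\alpha' = \alpha + \delta_i - \delta_k$ is $\leq$-incomparable to $\alpha$, so no conflict with maximality arises. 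All you get is $|\alpha'| = |\alpha|$ — which you acknowledge — but you have no maximality information about $\alpha'$, so the walk cannot be continued with the same control. That is the real gap: maximality is available only at the two endpoints of your path, not at intermediate points. ``Running the algorithm symmetrically from both ends'' and ``combining the two monotonicity statements'' does not pin $|\alpha| = |\beta|$: the first moves from each end tell you that the second vertex on each side has $\ell^1$-norm at most that of its endpoint, but the two partial paths need not meet in any controlled way, and the intermediate (non-maximal) vertices can increase $|\cdot|$ freely.

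The paper resolves exactly this with an extremal choice applied to a \emph{single} (J)-move, rather than a full path. Let $d$ be the maximum of $|\cdot|$ over $\leq$-maximal elements of $\JJ$ (equivalently over all of $\JJ$, since $\JJ \subseteq \NN^n$ in the application makes $|\cdot|$ monotone under $\leq$), let $M$ be the set of $\leq$-maximal elements with $|\cdot| = d$, and assume for contradiction that some $\leq$-maximal $\alpha$ has $|\alpha| < d$. Among all such pairs, choose $\beta \in M$ and a maximal $\alpha$ with $|\alpha| < d$ minimizing $|\beta - \alpha|$. Since $\alpha$ and $\beta$ are both maximal and distinct, $\alpha_j > \beta_j$ for some $j$; then $\delta_j$ is a step from $\beta$ to $\alpha$, and $\beta + \delta_j \notin \JJ$ by maximality of $\beta$. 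Axiom (J) gives $\beta' = \beta + \delta_j + t \in \JJ$ with $t$ a step toward $\alpha$; maximality of $\beta$ forces $t = -\delta_k$ with $k \neq j$, so $|\beta'| = d$. Because $d$ is the global maximum of $|\cdot|$ and $|\cdot|$ is monotone under $\leq$, the equality $|\beta'| = d$ forces $\beta'$ to be $\leq$-maximal, hence $\beta' \in M$ — this is precisely the invariant your proposal is missing. Since $|\beta' - \alpha| = |\beta - \alpha| - 2$, the minimality of $|\beta - \alpha|$ is contradicted after a single move. If you want to repair your write-up, this double minimization (first maximize $|\cdot|$, then minimize the $L^1$-distance to a smaller maximal element) is the ingredient you need; it replaces your long walk by one step and supplies maximality of the new vertex for free.
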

\begin{proof}
The proof is by contradiction. Let $M$ be the set of  maximal elements  $\beta$ of $\JJ$, with $|\beta|=d$ maximal. Suppose further that $\beta \in M$ is of minimal $L^1$-distance  
to the set of all maximal (w.r.t. $\leq$) elements $\alpha$ with $|\alpha|<d$. Let $\alpha$ be a maximal element that 
realizes the above distance to $\beta$.  

 Clearly 
$\alpha_j > \beta_j$ for some $j$. Thus $\delta_j$ is a step from $\beta$ to $\alpha$ and $\beta + \delta_j \notin \JJ$. By 
(J), $\beta'=\beta + \delta_j + s \in \JJ$ for some step $s$ from $\beta + \delta_j$ to $\alpha$. Since $\beta$ is maximal, the nonzero coordinate in $s$ is negative. Now, $|\beta'|=|\beta|$, so $\beta'$ is maximal (w.r.t. $\leq$). However, 
$|\beta'-\alpha|<|\beta -\alpha|$ which is the desired contradiction. 
\end{proof}

\begin{lemma}\label{stablehyp}
Suppose that $h$ is hyperbolic with respect to $e \in \RR^n$ and that $e_1, \ldots, e_m$ lie in the hyperbolicity cone of $h$, and $e_0 \in \RR^n$. Then 
the polynomial 
$$
p(x_1,\ldots, x_m) = h\left(e_0 + \sum_{j=0}^m e_jx_j\right) 
$$
is stable or identically zero. 

Moreover if additionally $h(e)>0$ and $e_0$ is in the closure of the hyperbolicity cone of $e$, then all Taylor coefficients of $p$ are nonnegative. 
\end{lemma}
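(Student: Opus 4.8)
The plan is to prove the stability claim by translating the hypothesis on $e_1,\dots,e_m$ into a statement about the hyperbolicity cone of $h$ and invoking G{\aa}rding's classical non-vanishing theorem, and then to obtain the coefficient statement by homogenizing $p$ and using the known fact that a homogeneous stable polynomial with real coefficients has all of its coefficients of the same sign.

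First I would record the facts about hyperbolic polynomials that I will use (see \cite{Ga,Ren}): the hyperbolicity cone $\Lambda$ of $h$ at $e$ is an open convex cone, $h$ is hyperbolic with respect to every $w\in\Lambda$, and consequently $h(v+\mu w)\neq0$ whenever $v\in\RR^n$, $w\in\Lambda$ and $\mu\in\CC\setminus\RR$; in particular $h(v+iw)\neq0$. Moreover, if $h(e)>0$ then $h$ is positive on $\Lambda$ and non-negative on $\overline\Lambda$, and $\Lambda+\overline\Lambda\subseteq\Lambda$, since a small ball about a point of $\Lambda$ lies in $\overline\Lambda$ and $\overline\Lambda+\overline\Lambda\subseteq\overline\Lambda$. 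Now, given $z\in\CC^m$ with $\Im z_j>0$ for all $j$, write $z_j=a_j+ib_j$ with $b_j>0$; then
\[
p(z)=h(v+iw),\qquad v=e_0+\sum_{j=1}^{m}a_je_j\in\RR^n,\qquad w=\sum_{j=1}^{m}b_je_j .
\]
If $m\geq1$ the vector $w$ is a positive combination of elements of $\Lambda$, hence $w\in\Lambda$, so $p(z)=h(v+iw)\neq0$; and if $m=0$ then $p$ is a constant. In either case $p$ is stable or identically zero.

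For the coefficient statement, assume in addition that $h(e)>0$ and $e_0\in\overline\Lambda$, and homogenize: let $\tilde p(x_0,x_1,\dots,x_m)=h\bigl(x_0e_0+\sum_{j=1}^{m}x_je_j\bigr)$, which is homogeneous of degree $d=\deg h$ (or identically zero) and satisfies $\tilde p(1,x_1,\dots,x_m)=p(x_1,\dots,x_m)$, so that the coefficients of $p$ coincide with those of $\tilde p$ under the obvious identification of monomials; it therefore suffices to show that all coefficients of $\tilde p$ are non-negative. Repeating the computation above with the extra variable $x_0$, and using $e_0\in\overline\Lambda$ together with $\Lambda+\overline\Lambda\subseteq\Lambda$, shows that whenever $\Im x_j>0$ for $j=0,\dots,m$ the imaginary-part direction of $x_0e_0+\sum_{j\geq1}x_je_j$ lies in $\Lambda$, so $\tilde p$ is stable; it is not identically zero when $m\geq1$ because $\tilde p(\delta_1)=h(e_1)>0$. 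Since $\tilde p$ is homogeneous, stable and has real coefficients, a standard fact (see \cite{COSW}) gives that all of its coefficients have the same sign; as $\tilde p(\mathbf{1})=h(e_0+e_1+\cdots+e_m)>0$ (the argument lying in $\Lambda$), that common sign is non-negative, and hence $p$ has non-negative coefficients. The case $m=0$ is immediate, since then $p=h(e_0)\geq0$.

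The step I expect to be the main obstacle is the coefficient statement, which rests entirely on the fact that a homogeneous stable polynomial with real coefficients has coefficients of a single sign. If one prefers not to quote that fact, I would instead observe that the Taylor coefficient of $p$ at a multi-index $\alpha$ equals $(\alpha!)^{-1}(D_{e_1}^{\alpha_1}\cdots D_{e_m}^{\alpha_m}h)(e_0)$, where $D_v$ denotes the directional derivative, and prove by induction on $|\alpha|$ that iterated directional derivatives of $h$ in directions lying in $\overline\Lambda$, evaluated at a point of $\overline\Lambda$, are non-negative; the inductive step is that if $g$ is hyperbolic with hyperbolicity cone containing $\Lambda$ and $g\geq0$ on $\overline\Lambda$, and $v\in\overline\Lambda$, then $D_vg$ is again hyperbolic with hyperbolicity cone containing $\Lambda$ (a standard interlacing property of directional derivatives, see \cite{Ren}) and satisfies $D_vg\geq0$ on $\overline\Lambda$, because for $x\in\Lambda$ the univariate polynomial $t\mapsto g(x+tv)$ has only real non-positive roots and positive leading coefficient, so its coefficient of $t$, which is $D_vg(x)$, is non-negative. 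In either route a one-line check disposes of the degenerate cases ($m=0$, or $\tilde p$ or some directional derivative being the zero polynomial).
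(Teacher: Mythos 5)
Your proof is correct and follows essentially the same route as the paper: for stability you decompose the argument of $h$ into real part plus $i$ times a positive combination of cone elements and invoke convexity of the hyperbolicity cone, and for the coefficient statement you pass to the homogenization $\tilde p=q$, observe it is homogeneous and stable, and cite the standard fact (from \cite{COSW}, also mentioned in the paper alongside the Renegar-derivative route you sketch as an alternative) that such polynomials have coefficients of a single sign. The only cosmetic differences are that you spell out the sign-normalization via $\tilde p(\mathbf{1})>0$ and the degenerate case $m=0$ more explicitly than the paper does, and you omit the paper's brief Hurwitz reduction (which the paper uses to pass from the closure of the cone to the open cone, a step your reading of the hypothesis makes unnecessary).
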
  
\begin{proof}
By Hurwitz' theorem we may assume that $e_1,\ldots, e_m$ are in the hyperbolicity cone containing $e$. Assume 
that $\alpha \in \RR^m$ and $\beta \in \RR_+^m$. Then 
$$
p(\alpha+i\beta)=h\left(e_0 +\sum_{j=1}^m\alpha_je_j+i\left(\sum_{j=1}^m\beta_je_j\right)\right) \neq 0, 
$$
since the hyperbolicity cone  is convex, see \cite{Ga, Ren}. Thus $p$ is stable. 

To prove the last statement we  show that 
all the Taylor coefficients of $$q(x_0,\ldots,x_m)= h(x_0e_0+\cdots +x_me_m)$$ are nonnegative. Clearly $q$ is hyperbolic (or identically zero) with hyperbolicity cone containing $\RR_+^d$, or equivalently, $q$ is homogeneous and stable. It is not hard to prove that such polynomials have nonnegative Taylor coefficients, either using Renegar derivatives \cite{Ren}, or as in \cite{BB,COSW}.  Hence, the Taylor coefficients of $p$ are nonnegative. 
\end{proof}

\begin{lemma}\label{diff}
Suppose that $h$ is hyperbolic with respect to $e \in \RR^n$ and that $e'$ lies in the hyperbolicity cone containing $e$. Then 
$$
\deg(h(e+xt))= \deg(h(e'+xt))
$$
for all $x \in \RR^n$. 
\end{lemma}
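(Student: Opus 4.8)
\textbf{Proof plan for Lemma \ref{diff}.}

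The plan is to reduce the statement to the univariate picture along the line $e + se'$ and to use the homogeneity of $h$. Fix $x \in \RR^n$, and consider the two-variable polynomial $(s,t) \mapsto h(e + se' + xt)$. For $s \geq 0$ the point $e + se'$ lies in the hyperbolicity cone containing $e$ (the cone is convex and contains $e$ and $e'$), so $h$ is hyperbolic with respect to $e + se'$ as well; in particular $h(e+se') \neq 0$ for all $s \geq 0$, and the degree of $t \mapsto h(e + se' + xt)$ is at most $d := \deg h$ for every such $s$. Call this degree $N(s)$. The goal is to show $N(0) = N(1)$, and in fact that $N$ is constant on $s \geq 0$.

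First I would observe that $N(s)$ is exactly the degree in $t$ of the polynomial obtained after clearing denominators: writing $h(e + se' + xt) = \sum_{k=0}^d c_k(s)\, t^k$ with each $c_k$ a polynomial in $s$, we have $N(s) = \max\{k : c_k(s) \neq 0\}$. The leading-in-$t$ coefficient $c_d(s) = h(x)$ is independent of $s$ (it is the coefficient of $t^d$ in $h(e+se'+xt)$, which by homogeneity equals $h(x)$). So there are two cases. If $h(x) \neq 0$ then $N(s) = d$ for all $s$ and we are done. If $h(x) = 0$, I would argue that $c_k(s)$, being a polynomial in the single variable $s$, is either identically zero or vanishes only at finitely many $s$; hence $N(s)$ equals its generic value $N^*$ for all but finitely many $s \geq 0$, and $N(s) \leq N^*$ for every $s$. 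It remains to rule out a strict drop $N(s_0) < N^*$ at exceptional points, and in particular at $s_0 = 0$ and $s_0 = 1$.

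To exclude such a drop I would use the key structural input that the hyperbolicity cone is \emph{open} and that $N$ is, on it, determined by the multiplicity of the zero of $t \mapsto h(y + xt)$ at $t = \infty$; equivalently, reversing, $N(s)$ is $d$ minus the multiplicity of $t=0$ as a root of $t^d h(e+se' + x/t) = h(te + tse' + x)$ — using homogeneity. So $d - N(s)$ is the order of vanishing at $t=0$ of $g_s(t) := h(x + t(e + se'))$. Now $e + se'$ lies in the open hyperbolicity cone for $s$ in an open neighbourhood of $[0,\infty)$ (again by convexity and openness), and for $y$ in the open cone the order of vanishing at $t=0$ of $t \mapsto h(x + ty)$ is constant in $y$: this is a standard fact (e.g.\ \cite{Ren}), namely the multiplicity of $x$ as a point of the cone's boundary structure does not depend on the chosen direction $y$ inside the open cone. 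Applying this with $y = e$ and $y = e + e'$ (both in the open cone) gives $d - N(0) = d - N(1)$, which is what we want.

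The main obstacle is making precise and citing the fact used in the last step — that for $x \in \RR^n$ and $y$ ranging over the open hyperbolicity cone, $\mathrm{ord}_{t=0}\, h(x+ty)$ is independent of $y$. I would prove it directly rather than quote it: let $k = d - N(y)$ for $y$ in the cone; then $h(x + ty) = t^k q(y,t)$ with $q(y,0) \neq 0$, and since $y \mapsto h(x+ty)$ depends polynomially on $y$ and $t$, the locus where the coefficient of $t^j$ (for $j < k^*$, the generic $k$) vanishes is a proper algebraic subset, so cannot contain the open cone; hyperbolicity forbids the order from jumping \emph{up} on the open cone because that would force $h(x + ty)$, a degree-$d$ real-rooted polynomial in $t$, to lose a real root to infinity, contradicting continuity of roots (Hurwitz) together with the fact that all $d$ roots stay real and finite as $y$ moves in the open cone with $h(y)\neq 0$. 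Assembling these observations gives $N(e) = N(e')$ and hence the claimed equality of degrees.
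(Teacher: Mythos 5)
Your reduction of the lemma to the constancy, over the open hyperbolicity cone, of the order of vanishing of $t\mapsto h(x+ty)$ at $t=0$ is exactly right, and your observation that the leading-in-$t$ coefficient of $h(y+xt)$ equals $h(x)$ (hence is independent of $y$) correctly settles the case $h(x)\neq 0$. The gap is in the direct argument you sketch for the constancy claim itself. You assert that if the order at $t=0$ jumped up at some $y_0$ in the open cone this would force $h(x+ty)$ to ``lose a real root to infinity.'' That is not what happens: an increase in the order of vanishing at $t=0$ means extra roots coalesce \emph{at the origin}, not at infinity. Since the leading coefficient $h(y)$ is nonzero throughout the open cone, the degree in $t$ stays equal to $d$ and no root escapes to infinity. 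Moreover, continuity of roots plus real-rootedness alone cannot rule out such coalescence: $(t-s)(t-1)$ is real-rooted for every real $s$ yet acquires a root at $t=0$ precisely when $s=0$. So Hurwitz together with hyperbolicity on the open cone, as you invoke them, do not close the argument. The fact you want is true --- it is a theorem of G\aa rding --- but it requires a further structural input beyond continuity of roots. (A second, minor slip: your parametrization $e+se'$ gives $\deg h(e+xt)=\deg h((e+e')+xt)$, so you still need one more application with the roles of $e,e'$ exchanged, or to use $(1-s)e+se'$; given the constancy fact this is immediate, but as written the conclusion is off by one step.)

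The paper proves the lemma by a different, combinatorial route that sidesteps the difficulty entirely. It notes that $p(s,t)=h(x+se+te')$ is a stable polynomial (Lemma~\ref{stablehyp}), that the support of a stable polynomial is a jump system (Theorem~\ref{supp}), and that in a finite jump system all minimal elements share the same $L^1$-norm (Lemma~\ref{simple}). With $\JJ=\supp(p)$, one computes $\deg_t h(e+xt)=d-\min\{i:(i,0)\in\JJ\}$, and since $(\min\{i:(i,0)\in\JJ\},0)$ is a minimal element of $\JJ$, this equals $d-\min\{|\alpha|:\alpha\in\JJ\}$, which is symmetric in $e$ and $e'$. This argument is self-contained in terms of the theory of stable polynomials and deliberately avoids quoting the classical multiplicity theory for hyperbolic polynomials, which is the point of Section~\ref{hyprank}. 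If you prefer your route, simply cite G\aa rding or Renegar for the multiplicity invariance rather than attempting the Hurwitz argument.
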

\begin{proof}
The polynomial $p(s,t)= h(x+se+te')$ is stable by Lemma \ref{stablehyp}. 
Let the degree of $h$ be $d$. By Theorem \ref{supp}, $\JJ= \supp(p)$ is a jump system and by Lemma~\ref{simple}
$$
\deg(h(e+sx))= \deg( s^dp(s^{-1},0)) =d - \min\{ i : (i,0) \in \JJ\}= d-\min\{|\alpha| : \alpha \in \JJ\},
$$
which does not depend on $e$. 
\end{proof}
\begin{corollary}\label{desc}
Let $h$ be a hyperbolic polynomial with respect to $e \in \RR^m$,  and let $\EE = (e_1, \ldots, e_n)$ be a tuple of $n$ vectors lying in the closure of the hyperbolicity cone of $h$ containing $e$. Let further $\JJ$ be the support of the stable and homogeneous polynomial $h(x_1e_1+ \cdots + x_ne_n)$. Then 
the rank function associated to $\EE$ satisfies 
$$
r_\EE(S)= \max\left\{ \sum_{i \in S}\alpha_i : \alpha \in \JJ\right\}, 
$$
for all $S \subseteq [n]$. 
\end{corollary}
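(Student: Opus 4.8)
The plan is to unwind the definitions and reduce everything to the one-variable degree computation already carried out in Lemma \ref{diff}. Fix $S \subseteq [n]$ and write $e_S = \sum_{i \in S} e_i$. By definition $r_\EE(S) = \rank_h(e_S) = \deg\bigl(h(e + t e_S)\bigr)$, so the task is to show that this degree equals $\max\{\sum_{i\in S}\alpha_i : \alpha \in \JJ\}$, where $\JJ = \supp\bigl(h(x_1 e_1 + \cdots + x_n e_n)\bigr)$.

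First I would replace $e$ by a more convenient vector in the hyperbolicity cone. Since each $e_i$ lies in the \emph{closure} of the open hyperbolicity cone containing $e$, the sum $e + \sum_{i=1}^n e_i$ lies in the open cone (the open cone plus the closed cone is the open cone, by convexity — see \cite{Ga,Ren}). By Lemma \ref{diff}, $\deg(h(e + t e_S)) = \deg(h(e' + t e_S))$ for any $e'$ in the open hyperbolicity cone, so I may take $e' = e_1 + \cdots + e_n$, i.e.\ evaluate at $x_i = 1$ for all $i$. Then
$$
r_\EE(S) = \deg_t\, h\Bigl(\sum_{i=1}^n e_i + t\sum_{i\in S}e_i\Bigr) = \deg_t\, h\Bigl(\sum_{i\in S}(1+t)e_i + \sum_{i\notin S}e_i\Bigr).
$$
This is exactly the polynomial $H(x_1,\dots,x_n) = h(x_1 e_1 + \cdots + x_n e_n)$ evaluated along the line $x_i = 1+t$ for $i \in S$ and $x_i = 1$ for $i \notin S$, and we want its degree in $t$.

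Next I would compute that degree combinatorially. Since $H$ is homogeneous and stable (Lemma \ref{stablehyp}), its support $\JJ$ is a jump system with nonnegative Taylor coefficients (Theorem \ref{supp}). Substituting $x_i = 1+t$ for $i \in S$ and $x_i = 1$ otherwise turns each monomial $x^\alpha$ into $(1+t)^{\sum_{i\in S}\alpha_i}$, contributing a term of $t$-degree $\sum_{i\in S}\alpha_i$ with a \emph{positive} coefficient $\binom{\sum_{i\in S}\alpha_i}{k}$ at each power $t^k$. Because all Taylor coefficients of $H$ are nonnegative, there is no cancellation among the leading terms coming from different $\alpha$, so the $t$-degree of the substituted polynomial is precisely $\max\{\sum_{i\in S}\alpha_i : \alpha \in \JJ\}$. (One could phrase this more slickly via Lemma \ref{simple} applied to the projection of $\JJ$ onto the coordinates in $S$, since a coordinate projection of a jump system is a jump system, but the no-cancellation argument from nonnegativity is the most direct route.) Combining the two displays gives the claimed formula.

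The main obstacle is the independence of $r_\EE(S)$ from the choice of $e$ — that is, making sure the degree really is intrinsic to the hyperbolicity cone and not an artifact of evaluating at the specific point $e_1 + \cdots + e_n$; this is precisely what Lemma \ref{diff} delivers, together with the convexity fact that $e + \sum e_i$ lands in the \emph{open} cone even though the individual $e_i$ are only in its closure. Once that reduction is in place, the rest is the routine observation that a nonnegative-coefficient substitution cannot lower the expected degree, so no genuine difficulty remains.
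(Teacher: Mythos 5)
Your reduction to the base point $e_1+\cdots+e_n$ is the step that does not follow. You correctly observe, by convexity, that $e + \sum_{i=1}^n e_i$ lies in the \emph{open} hyperbolicity cone, and Lemma~\ref{diff} lets you move the base point anywhere in that open cone. But you then conclude ``so I may take $e' = e_1+\cdots+e_n$'', which is a different point: $\sum_i e_i$ is a priori only in the \emph{closure} of the cone, so Lemma~\ref{diff} does not directly apply to it. If instead you use $e'=e+\sum_i e_i$, as your convexity argument actually licenses, then $h(e' + te_S)$ is the \emph{inhomogeneous} polynomial $p(x)=h(e+x_1e_1+\cdots+x_ne_n)$ evaluated at $x_i=1+t$ ($i\in S$), $x_i=1$ ($i\notin S$); your no-cancellation computation then yields $\max\{\sum_{i\in S}\alpha_i:\alpha\in\supp(p)\}$, not the claimed $\max$ over $\JJ=\supp(H)$, and you would still owe the argument (essentially Lemma~\ref{simple}, identifying $\JJ$ with the set of $\leq$-maximal elements of $\supp(p)$) needed to pass from $\supp(p)$ to $\JJ$.

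The gap is repairable, and with it repaired your route is a genuine and slightly cleaner alternative to the paper's. The Corollary implicitly assumes $H=h(x_1e_1+\cdots+x_ne_n)$ is not identically zero (otherwise $\JJ=\emptyset$ and the right-hand side is vacuous). A nonzero homogeneous stable polynomial has nonnegative Taylor coefficients, so $h\bigl(\sum_i e_i\bigr)=H(\mathbf 1)>0$; as $\sum_i e_i$ lies in the closed cone and the boundary of the open cone is contained in $\{h=0\}$, the point $\sum_i e_i$ is in fact in the \emph{open} cone, and your application of Lemma~\ref{diff} with $e'=\sum_i e_i$ is then legitimate. Once that is in place, your positive substitution $x_i=1$ or $1+t$ sees every monomial of $H$ and exploits nonnegativity to read off the degree directly, whereas the paper substitutes $x_i=t$ or $0$ into $p$, which annihilates all monomials not supported on $S$ and therefore forces the use of the second ``staircase'' part of Theorem~\ref{supp} together with Lemma~\ref{simple} to relate $\supp(p)$ to $\JJ$. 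Your approach dispenses with both of those tools at the cost of the base-point argument just supplied.
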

\begin{proof}
Let $p(x_1,\ldots, x_n)= h(e+x_1e_1+\cdots+x_ne_n)$. By Lemma \ref{stablehyp} $p$ is stable and has nonnegative Taylor coefficients. Hence 
$$
r_\EE(S)= \deg p\left(t \sum_{i\in S}e_i\right)= \max\left\{ \sum_{i \in S}\alpha_i :  \sum_{i \in S}\alpha_i\delta_i\in \supp(p)\right\}. 
$$
Note that the set of maximal elements (w.r.t. $\leq$) of $\supp(p)$ is equal to $\JJ$. Thus the inequality  $r_\EE(S)\leq  \max\left\{ \sum_{i \in S}\alpha_i : \alpha \in \JJ\right\}$ follows from Lemma \ref{simple}. Suppose that $\alpha \in \JJ$ and $S \subseteq [n]$. Since $0 \in \supp(p)$ and 
$0 \leq \sum_{i \in S}\alpha_i\delta_i \leq \alpha$ we have by Theorem \ref{supp} that $\sum_{i \in S}\alpha_i\delta_i \in \JJ$. Hence 
$r_\EE(S)\geq  \max\left\{ \sum_{i \in S}\alpha_i : \alpha \in \JJ\right\}$. 
\end{proof}
We may now prove Proposition \ref{rankp}.
\begin{proof}[Proof of Proposition  \ref{rankp}]
Keep the  notation in the proof of Corollary \ref{desc}. Then $\JJ$ is a jump system for which all vectors have constant sum. Such jump systems are known to coincide with the set of integer points of integral base polyhedra, see \cite{Murota}. Clearly $r_\EE$ satisfies (1) and (2) of the definition of a polymatroid. The submodularity of  
$$
S \mapsto  \max\left\{ \sum_{i \in S}\alpha_i : \alpha \in \JJ\right\}
 $$
holds for every constant sum jump system, see \cite{Murota}. 
\end{proof}

\noindent 
\textbf{Acknowledgments.} It is a pleasure to thank Banff International Research Station for generous support during the workshop ``Convex Algebraic Geometry'' where this work was initiated. I also thank the participants of the workshop for stimulating discussions on the topic of this paper; especially Helton, Parrilo, Schweighofer, Sturmfels and Vinnikov.


\begin{thebibliography}{99}
\bibitem{BB} 
J.~Borcea, P.~Br\"and\'en, {The Lee-Yang and P\'olya-Schur programs. II. Theory of stable polynomials and applications}, 
Comm. Pure Appl. Math. {\bf 62} (2009), 1595--1631.


\bibitem{Br1} P.~Br\"and\'en, Polynomials with the half-plane property and matroid theory, Adv. Math. {\bf 216} 
(2007), 302--320. 

\bibitem{COSW} 
Y.~Choe, J.~Oxley, A.~Sokal, D.~G.~Wagner, { Homogeneous multivariate 
polynomials with the half-plane property}. 
Adv. Appl. Math. {\bf 32} (2004), 88--187.

\bibitem{DFZ} R.~Dougherty, C.~Freiling, K.~Zeger,  Linear rank inequalities on five or more variables, \url{arXiv:0910.0284v2}. 

\bibitem{Ga} 
L.~G\aa rding, {An inequality for hyperbolic polynomials}, 
J. Math. Mech. {\bf 8} (1959), 957--965. 


\bibitem{Gu} L.~Gurvits, Combinatorial and algorithmic aspects of hyperbolic polynomials, \url{arXiv:math/0404474}.


\bibitem{HV} J.~Helton, V.~Vinnikov, Linear Matrix Inequality Representation of Sets, Comm. Pure Appl. 
Math. {\bf 60} (2007), 654--674. 


\bibitem{Ing} 
A.~W.~Ingleton, Representation of matroids, in \emph{Combinatorial Mathematics and its Applications} (Proc. Conf., Oxford, 1969) pp. 149--167 Academic Press, London. 


\bibitem{Kinser} 
R.~Kinser, New inequalities for subspace arrangements, J. Combin. Theory. Ser. A \emph{(in press)}. 


\bibitem{LPR} A.~Lewis, P.~Parrilo, M.~Ramana, The Lax conjecture is true, Proc. Amer. Math. Soc. {\bf 133} (2005), 2495--2499.

\bibitem{Murota} K.~Murota, \emph{Discrete convex analysis}, SIAM Monographs on Discrete Mathematics and Applications, Philadelphia, 2003.

\bibitem{Nu}
W.~Nuij, A note on hyperbolic polynomials, Math. Scand. {\bf 23} (1968), 69--72. 

\bibitem{Oxley} J.~Oxley, \emph{Matroid theory},  Oxford University Press, New York, 1992.

\bibitem{PS}  P.~Parrilo, B.~Sturmfels, Minimizing polynomial functions, In \emph{Algorithmic and quantitative real algebraic geometry},
DIMACS Series in Discrete Mathematics and Theoretical Computer Science, Vol. 60, pp. 83--99, AMS.

\bibitem{Ren} J.~Renegar,  Hyperbolic programs, and their derivative relaxations,  Found. Comput. Math., 
{\bf 6} (2006), 59--79.




\bibitem{WW}
D.~G.~Wagner, Y.~Wei, {A criterion for the half-plane property}, Discrete Math. {\bf 309} (2009), 1385--1390.



\end{thebibliography}
\end{document}